\font\smallit=cmti10
\begin{document}

\newtheorem{theorem}{Theorem}         
\newtheorem{proposition}{Proposition}
\newtheorem{cor}[proposition]{Corollary}
\newtheorem{lemma}[proposition]{Lemma}
\newtheorem{remark}[proposition]{Remark}
\newtheorem{definition}[proposition]{Definition}
\newtheorem{example}[proposition]{Example}

\numberwithin{equation}{section}

\renewcommand{\AA}{\mathcal A}
\newcommand{\Z}{\mathbb Z}
\newcommand{\N}{\mathbb N}
\newcommand{\R}{\mathbb R}
\newcommand{\C}{\mathbb C}
\newcommand{\Q}{\mathbb Q}
\newcommand{\RRR}{\mathfrak R}
\newcommand{\GG}{\mathcal G}
\newcommand{\A}{\mathfrak A}
\newcommand{\VV}{\mathcal V}
\newcommand{\EE}{\mathcal E}
\newcommand{\G}{\mathbb G}
\newcommand{\F}{\mathbb F}
\renewcommand{\H}{\mathbb H}
\newcommand{\I}{\mathbb I}
\newcommand{\DD}{\mathcal D}
\newcommand{\CC}{\mathcal C}
\newcommand{\LL}{\mathcal L}
\newcommand{\RR}{\mathcal R}
\newcommand{\BB}{\mathcal B}
\newcommand{\M}{\mathbb M}
\newcommand{\B}{\mathbb B}
\newcommand{\K}{\mathbb K}
\newcommand{\II}{\mathfrak I}
\newcommand{\oF}{\overline{F}}
\newcommand{\KK}{\mathcal K}
\newcommand{\OO}{\mathcal O}
\newcommand{\QQ}{\mathcal Q}
\newcommand{\MM}{\mathcal M}
\newcommand{\wMM}{\widetilde{\RRR}}
\newcommand{\NN}{\mathcal N}
\newcommand{\TT}{\mathcal T}
\newcommand{\FF}{\mathcal F}
\newcommand{\bb}{\widetilde{\beta}}
\newcommand{\we}{\widetilde{e}}
\newcommand{\wf}{\widetilde{f}}
\newcommand{\Prim}{\operatorname{Prim}}
\newcommand{\boplus}{\mbox{\small $\displaystyle \bigoplus$}}
\newcommand{\oee}{\overline{e}}
\renewcommand{\Re}{\operatorname{Re}}
\newcommand{\veps}{\varepsilon}
\newcommand{\tr}{\operatorname{Tr}}
\newcommand{\wFF}{\widetilde{\FF}}
\newcommand{\comment}[1]{}

\begin{center}
\uppercase{\bf Gap distribution of Farey fractions under \\ some divisibility constraints}
\vskip 20pt
{\bf Florin P. Boca\footnote{Member of the Institute of Mathematics ``Simion Stoilow" of the Romanian
Academy, 21 Calea Grivi\c tei, 010702 Bucharest, Romania}}\\
{\smallit Department of Mathematics, University of Illinois, Urbana, IL 61801, USA}\\
{\tt fboca@illinois.edu}
\vskip 10pt
{\bf Byron Heersink }\\
{\smallit Department of Mathematics, University of Illinois, Urbana, IL 61801, USA}\\
{\tt heersin2@illinois.edu}
\vskip 10pt
{\bf Paul Spiegelhalter}\\
{\smallit Department of Mathematics, University of Illinois, Urbana, IL 61801, USA}\\
{\tt spiegel3@illinois.edu}
\end{center}
\vskip 30pt

\centerline{\bf Abstract}

\noindent
For a given positive integer $\ell$ we show the existence of the limiting gap distribution measure for the sets of
Farey fractions $\frac{a}{q}$ of order $Q$ with $\ell\nmid a$, and respectively with $(q,\ell)=1$,
as $Q\rightarrow\infty$.

\section{Introduction}
The set $\FF_Q$ of Farey fractions of order $Q$ consists of those rational numbers $\frac{a}{q} \in (0,1]$ with $(a,q)=1$ and $q\leqslant Q$.
The spacing statistics of the increasing sequence $(\FF_Q)$ of finite subsets of $(0,1]$
have been investigated by several authors \cite{Hall,ABCZ,BZ}. Recently
Badziahin and Haynes considered a problem related to the distribution of gaps in the subset $\FF_{Q,d}$ of $\FF_Q$ of those fractions
$\frac{a}{q}$ with $(q,d)=1$, where $d$ is a fixed positive integer and $Q\rightarrow \infty$. They proved \cite{BH}
that, for each $k\in\N$, the number $N_{Q,d}(k)$ of pairs $\big( \frac{a}{q},\frac{a'}{q'}\big)$ of consecutive elements in $\FF_{Q,d}$
with $a^\prime q-aq^\prime =k$ satisfies the asymptotic formula
\begin{equation}\label{1.1}
N_{Q,d}(k)=c(d,k) Q^2 +O_{d,k}(Q\log Q) \qquad (Q\rightarrow\infty),
\end{equation}
for some positive constant $c(d,k)$ that can be expressed using the measure of certain cylinders associated with the
area-preserving transformation introduced by Cobeli, Zaharescu, and the first author in \cite{BCZ}. The pair correlation function of $(\FF_{Q,d})$ was studied and shown to exist
by Xiong and Zaharescu \cite{XZ}, even in the more general situation where $d=d_Q$ is no longer constant but increases according to the rules $d_{Q_1} \mid d_{Q_2}$ as $Q_1<Q_2$ and $d_Q \ll Q^{\log \log Q /4}$.

This paper is concerned with the gap distribution of the sequence of sets $(\FF_{Q,d})$, and respectively of
$(\widetilde{\FF}_{Q,\ell})$, the sequence of sets $\wFF_{Q,\ell}$ of Farey fractions $\gamma=\frac{a}{q}\in\FF_Q$ with
$\ell\nmid a$. Our peculiar interest in $\wFF_{Q,\ell}$ arises from the problem studied in \cite{BG}, concerning the distribution
of the free path associated to the linear flow through $(0,0)$ in $\R^2$ in the small scatterer limit, in the case of circular scatterers of radius $\varepsilon>0$ placed at the points $(m,n)\in \Z^2$ with $\ell \nmid (m-n)$.
When $\ell=3$ this corresponds, after suitable normalization, to the situation of scatterers distributed at the vertices of a honeycomb tessellation, and the linear flow passing through the center of one of the hexagons.
When $\ell=2$ the scatterers are placed at the vertices of a square lattice and the linear flow passes through the center of one the squares. Arithmetic properties of the number $\ell$ are shown to be explicitly reflected by the gap distribution of the elements of $(\wFF_{Q,\ell})$.
The symmetry $x\mapsto 1-x$ shows that for the purpose of studying the gap distribution of these fractions on $[0,1]$ one can replace the condition $\ell \nmid (m-n)$ by
the more esthetic one $\ell \nmid n$.

The \emph{gap distribution} (or \emph{nearest neighbor distribution}) of a numerical sequence, or more generally of a sequence of finite subsets of $[0,1)$,
measures the distribution of lengths of gaps between the elements of the sequence. Let $A=\{ x_0 \leqslant x_1 \leqslant \ldots \leqslant x_N\}$
be a finite list of numbers in $[0,1)$, scaled to $\tilde{x}_j =\frac{Nx_j}{x_N-x_0}$ with mean spacing $\frac{\tilde{x}_N -\tilde{x}_0}{N} =1$.
The \emph{gap distribution measure} of $A$ is the finitely supported probability measure on $[0,\infty)$ defined by
\[
\nu_A (-\infty,\xi]=\nu_A [0,\xi] :=\frac{1}{N} \# \big\{ j\in [1,N] : \tilde{x}_j -\tilde{x}_{j-1} \leqslant \xi \big\} ,\qquad \xi \geqslant 0.
\]
If it exists, the weak limit $\nu=\nu_{\mathcal A}$ of the sequence $(\nu_{A_n})$ of probability measures associated with an increasing sequence ${\mathcal A}=(A_n)$ of finite lists of numbers in $[0,1)$,
is called the \emph{limiting gap measure} of ${\mathcal A}$.

It is elementary (see, e.g., Lemma \ref{L1} below) that
\begin{equation}\label{1.2}
\# \wFF_{Q,\ell} =\widetilde{K}_\ell Q^2 +O_\ell (Q\log Q) ,\qquad \# \FF_{Q,d} =K_d Q^2 +O_d (Q\log Q) ,
\end{equation}
where
\begin{equation*}
\widetilde{K}_\ell = \frac{1}{2\zeta(2)}-\frac{C(\ell)}{2\ell},\quad K_d =\frac{C(d)}{2},\quad
\mbox{\rm with}  \quad  C(\ell)=\frac{1}{\zeta(2)}  \prod\limits_{\substack{p\in {\mathcal P} \\ p\mid \ell}}
\bigg( 1+\frac{1}{p}\bigg)^{-1} .
\end{equation*}

We prove the following result:

\begin{theorem}\label{T1}
Given positive integers $\ell$ and $d$, the limiting gap measures $\widetilde{\nu}_\ell$ of $(\wFF_{Q,\ell})$, and respectively $\nu_d$ of $(\FF_{Q,d})$, exist.
Their densities are continuous on $[0,\infty)$ and
real analytic on each component of $(0,\infty) \setminus \N \widetilde{K}_\ell$, and respectively of $(0,\infty) \setminus \N K_d$.
\end{theorem}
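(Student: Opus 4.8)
The plan is to realize every normalized gap as the value of an explicit function on the phase space of the Boca--Cobeli--Zaharescu (BCZ) map and to reduce the theorem to an equidistribution statement on a finite congruence cover. Write $\TT=\{(x,y)\in(0,1]^2:x+y>1\}$ for the Farey triangle and let $T(x,y)=\big(y,\lfloor\tfrac{1+x}{y}\rfloor y-x\big)$ be the Lebesgue-preserving BCZ map, which sends the normalized pair $(q_-/Q,q/Q)$ of (left-neighbour denominator, own denominator) of a fraction $\tfrac aq\in\FF_Q$ to the corresponding pair of its successor in $\FF_Q$; as $\tfrac aq$ ranges over $\FF_Q$ these points equidistribute in $\TT$ with respect to normalized Lebesgue measure \cite{BCZ}. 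First I would record the elementary fact that if $\gamma=\tfrac aq$ and $\gamma^+=\tfrac{a^+}{q^+}$ are consecutive in $\wFF_{Q,\ell}$ (resp. in $\FF_{Q,d}$) and $k:=a^+q-aq^+$, then $\gamma^+-\gamma=k/(qq^+)$, so by \eqref{1.2} the normalized gap is $\widetilde K_\ell\,Q^2(\gamma^+-\gamma)+o(1)=\widetilde K_\ell k/(uv)$ with $(u,v):=(q/Q,q^+/Q)\in(0,1]^2$ (resp. $K_dk/(uv)$). Since $uv\le1$, every such normalized gap is $\ge\widetilde K_\ell k$, so only the finitely many $k\le\xi/\widetilde K_\ell$ can contribute a gap $\le\xi$.

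Next I would lift $T$ to a map $\hat T$ on $\TT\times\Gamma$, where $\Gamma$ is the finite set of admissible residue pairs $(a_-,a)\bmod\ell$ (resp.\ $(q_-,q)\bmod d$); numerators and denominators both obey the recursion $v_{i+1}=\lfloor\tfrac{1+x_i}{y_i}\rfloor v_i-v_{i-1}$, so subset membership of a Farey fraction is read off from its $\Gamma$-coordinate, and the subset elements form a cross-section $S=\TT\times\Gamma_0$. Consecutive-in-subset pairs $\gamma\to\gamma^+$ are exactly the first-return pairs of $\hat T$ to $S$, the return time counting the skipped fractions and $k$ being the resulting determinant. The central input is that the $\hat T$-orbits arising from $\FF_Q$ equidistribute in $\TT\times\Gamma$ with respect to (normalized Lebesgue)$\times$(counting), uniformly enough to pass to first-return statistics; this is the congruence refinement of the equidistribution underlying \eqref{1.1} and of \cite{BH}, obtainable either by the lattice-point/Kloosterman-sum method on the cover or by horocycle equidistribution in a congruence quotient. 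Granting it, the first-return map $R\colon S\to S$ is piecewise affine and measure-preserving, so for each $k$ the normalized own-denominators $(u,v)$ of return pairs with determinant $k$ equidistribute with a density $\rho_k(u,v)$ that is \emph{constant on each cell} of the polygonal partition of $(0,1]^2$ cut out by the itinerary conditions $\lfloor\tfrac{1+x_i}{y_i}\rfloor=m_i$ along the return orbit together with the congruence conditions; equivalently $\rho_k$ is a finite nonnegative combination of characteristic functions of convex polygons with integer-coefficient walls.

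The limiting measure is then the pushforward $\widetilde\nu_\ell=\sum_{k\ge1}(\Phi_k)_*\big(\rho_k\,du\,dv\big)$ under $\Phi_k(u,v)=\widetilde K_\ell k/(uv)$ (and likewise $\nu_d$). A co-area computation along the hyperbolas $uv=\widetilde K_\ell k/\xi$ yields the density $g(\xi)=\sum_kg_k(\xi)$ with
\[
g_k(\xi)=\frac{\widetilde K_\ell k}{\xi^2}\int_{I_k(\xi)}\rho_k\Big(u,\tfrac{\widetilde K_\ell k}{\xi u}\Big)\,\frac{du}{u},
\]
where $I_k(\xi)$ is the $u$-range of the hyperbola inside the support. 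Because $\rho_k$ is piecewise constant, each integral is a finite sum of logarithmic differences of the abscissae at which the hyperbola meets the cell walls, and these abscissae are real-analytic in $\xi$ as long as the hyperbola avoids the vertices of the partition and is transverse to its walls. Since $g_k$ is supported on $[\widetilde K_\ell k,\infty)$, on every bounded interval the sum is finite; hence $g$ is continuous on $[0,\infty)$ — each $g_k$ switches on continuously at its onset $\xi=\widetilde K_\ell k$, where the hyperbola degenerates to the corner $(1,1)$ and $g_k\to0$ — and is real-analytic off the discrete set of $\xi$ at which some hyperbola meets a partition vertex or is tangent to a wall.

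The main obstacle is to prove that this exceptional set lies in $\N\widetilde K_\ell$ (resp.\ $\N K_d$). Each interior wall $1+x_i=m_i y_i$ records, in unnormalized form, the event that an intermediate Farey denominator equals $Q$ exactly, and a vertex corresponds to two such degeneracies occurring together (including the boundary cases $u=1$, $v=1$, $u+v=1$). I expect the decisive computation to be that whenever the hyperbola $uv=\widetilde K_\ell k/\xi$ passes through such a vertex, or is tangent to a wall, the product $uv$ is forced to equal $k/n$ for some $n\in\N$, so that $\xi=\widetilde K_\ell n$; this is the analytic heart of the argument and generalizes Hall's observation \cite{Hall} that for the full sequence $\FF_Q$ (where $k\equiv1$ and the single cell is $\TT$) the only break occurs at $uv=\tfrac14$, i.e.\ at $4\cdot\tfrac1{2\zeta(2)}$, the tangency to the hypotenuse $u+v=1$. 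Controlling the tail in $k$ uniformly, so that $\sum_kg_k$ is a genuine probability density with the asserted regularity, and checking that the values taken by the piecewise-constant densities $\rho_k$ introduce no further breaks, are the remaining technical points.
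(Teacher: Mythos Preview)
Your dynamical framework—lifting the BCZ map to a congruence cover, reading gaps as values of a first-return function, and invoking equidistribution on the cover—is sound and, for the \emph{existence} of the limiting measures, amounts to an abstract repackaging of the paper's concrete computations. The equidistribution input you describe is exactly the lattice-point estimate the paper uses (after \cite{BH}), and your first-return picture is precisely the decomposition of $N_d(Q,\xi)$ carried out in Section~4. Two points of comparison deserve mention. First, for $(\wFF_{Q,\ell})$ the paper exploits an arithmetic simplification you do not invoke: since $a'q-aq'=1$ for consecutive elements of $\FF_Q$, at most one of $a,a'$ can be divisible by $\ell$, so the return time to your cross-section is at most $2$. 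This collapses your unbounded itinerary partition into two explicit cases and produces the closed formula \eqref{2.9}. Second, for $(\FF_{Q,d})$ finiteness of the return time is not automatic and is supplied by the bound $L(d)$ imported from \cite{BH} (Section~3); you should invoke it rather than defer it to a ``tail control'' step.

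There is a genuine gap in the regularity argument. You correctly observe that the density is analytic away from values of $\xi$ at which the hyperbola $uv=k/\xi$ passes through a vertex of the itinerary partition or is tangent to a wall, and you conjecture that at such events $uv$ is forced to equal $k/n$ for some $n\in\N$. But this fails for individual cells: in the paper's parametrization the bottom vertex of $\TT_K$ is met when $\xi=(K+2)^2/K$, the upper-left vertex when $\xi=K(K+1)/(K-1)$, and tangency to the image of the hypotenuse when $\xi=4(K+1)/K$—none of which are integers for generic $K$. The paper sidesteps this by computing the areas $A(\xi)$ and $A_K(\xi)$ explicitly (Section~2.1), so that any assertions about the location of non-analytic points can be read off from the formulas; your route would instead require proving that contributions from adjacent cells glue real-analytically across shared walls and vertices, leaving singularities only at the onset points $\xi=k$. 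That cancellation argument is the actual content of the regularity claim, and your proposal does not supply it. Your Hall analogy is also miscalibrated: for the full Farey sequence the breaks in the density occur at $\xi=1$ and $\xi=4$ (tangency to $u+v=1$), while $\tfrac{1}{2\zeta(2)}\cdot 4$ is not the relevant normalization.
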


The existence of $\widetilde{\nu}_\ell$ is proved in Section 2 and the limiting gap distribution is explicitly computed
in \eqref{2.9} using
tools from \cite{BCZ}, \cite{BZ1} and \cite{BG}. The result on $\nu_d$ is proved in Section 4. When $d$ is a prime power, an
explicit computation can be done as for $\tilde{\nu}_\ell$. In general the repartition function of $\nu_d$ depends on the measure of some cylinders
associated with the transformation $T$ from \eqref{2.7}, and on the length of strings of consecutive elements
in $\FF_Q$ with at least one denominator relatively prime with $d$.

The upper bound $4d^3$ for $L(d)=\min\{ L: \forall i,\forall Q,\exists j\in [0,L], (q_{i+j},d)=1\}$ was found in \cite{BH},
where $q_i,\ldots,q_{i+L}$ denote the denominators of a string $\gamma_i <\cdots < \gamma_{i+L}$ of consecutive elements in $\FF_Q$.
Although we expect this bound to be considerably smaller, we could only improve it in a limited number of situations.
In Section 3 we lower it to $4\omega (d)^3$ for integers $d$ with the
property that the smallest prime divisor of $d$ is $\geqslant \omega (d)$, where $\omega (d)$ denotes as usual the number of distinct prime
factors of $d$. The bound $L(d)=1$ is trivial when $d$ is a prime power. Employing properties of the transformation $T^2$
we show that $L(d)\leqslant 5$ when $d$ is the product of two prime powers, which is sharp. Finding better bounds
on $L(d)$ when $\omega (d)\geqslant 3$ appears to be an interesting problem in combinatorial number theory.

\section{The gap distribution of $\wFF_{Q,\ell}$}
Let $\mathcal{F}_Q^{(\ell)}=\FF_Q \setminus \wFF_{Q,\ell}$ denote the set of Farey fractions $\gamma=\frac{a}{q}\in \FF_Q$ with $\ell \mid a$, and let
$N_Q^{(\ell)}$ denote the cardinality of $\FF_Q^{(\ell)}$. Consider also:
\[
\begin{split}
\GG_Q (\xi) & :=\bigg\{ (\gamma,\gamma^\prime): \mbox{\rm $\gamma$, $\gamma^\prime$ consecutive in $\FF_Q$}, 0<\gamma^\prime -\gamma \leqslant \frac{\xi}{Q^2} \bigg\},
\\
\GG^{(\ell)}_Q (\xi) & := \bigg\{ (\gamma,\gamma^\prime): \mbox{\rm $\gamma$, $\gamma^\prime$ consecutive in $\wFF_{Q,\ell}$},
0<\gamma^\prime -\gamma \leqslant \frac{\xi}{Q^2} \bigg\} ,
\\  N_Q (\xi): & =\# \GG_Q (\xi),
\quad N^{(\ell)}_Q (\xi):= \# \GG^{(\ell)}_Q (\xi) .
\end{split}
\]

\begin{lemma}\label{L1}
$\displaystyle \  N_Q^{(\ell)} = \frac{C(\ell)}{2\ell} Q^2 +O_\ell (Q\log Q)$ as $Q\rightarrow\infty$.
\end{lemma}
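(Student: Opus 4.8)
The plan is to count the pairs $(a,q)$ directly and reduce everything to a restricted totient sum. A fraction $\frac{a}{q}\in\FF_Q^{(\ell)}$ satisfies $1\leqslant a\leqslant q\leqslant Q$, $(a,q)=1$, and $\ell\mid a$. Writing $a=\ell b$, the coprimality $(\ell b,q)=1$ is equivalent to $(q,\ell)=1$ together with $(b,q)=1$, while $a\leqslant q$ becomes $1\leqslant b\leqslant q/\ell$. Thus
\[
N_Q^{(\ell)}=\sum_{\substack{q\leqslant Q\\ (q,\ell)=1}} \#\big\{ b : 1\leqslant b\leqslant q/\ell,\ (b,q)=1\big\}.
\]
First I would handle the inner count by Möbius inversion: $\#\{1\leqslant b\leqslant x:(b,q)=1\}=\sum_{d\mid q}\mu(d)\lfloor x/d\rfloor = x\frac{\phi(q)}{q}+O(2^{\omega(q)})$, since $\sum_{d\mid q}|\mu(d)|=2^{\omega(q)}$. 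Taking $x=q/\ell$ gives an inner count of $\frac{\phi(q)}{\ell}+O(2^{\omega(q)})$, and hence $N_Q^{(\ell)}=\frac{1}{\ell}\sum_{(q,\ell)=1,\,q\leqslant Q}\phi(q)+O\big(\sum_{q\leqslant Q}2^{\omega(q)}\big)$. The error is $O(Q\log Q)$ because $2^{\omega(q)}=\sum_{d\mid q}\mu^2(d)$ yields $\sum_{q\leqslant Q}2^{\omega(q)}\leqslant Q\sum_{d\leqslant Q}\mu^2(d)/d=O(Q\log Q)$.

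It then remains to evaluate the main term $S(Q):=\sum_{q\leqslant Q,\,(q,\ell)=1}\phi(q)$. Here I would use the identity $\phi=\mu*\mathrm{id}$, i.e.\ $\phi(q)=\sum_{d\mid q}\mu(d)(q/d)$, and substitute $q=dk$. The condition $(q,\ell)=1$ factors as $(d,\ell)=1$ and $(k,\ell)=1$, so
\[
S(Q)=\sum_{\substack{d\leqslant Q\\ (d,\ell)=1}}\mu(d)\sum_{\substack{k\leqslant Q/d\\ (k,\ell)=1}}k .
\]
The inner sum is evaluated again by Möbius on $(k,\ell)=1$, giving $\sum_{k\leqslant x,\,(k,\ell)=1}k=\frac{\phi(\ell)}{\ell}\frac{x^2}{2}+O_\ell(x)$. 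Inserting $x=Q/d$, extending the $d$-sum to all $d$ coprime to $\ell$ (the tail contributing $O(Q)$) and bounding the $O_\ell(Q/d)$ contribution by $O_\ell(Q\log Q)$, I obtain
\[
S(Q)=\frac{Q^2}{2}\,\frac{\phi(\ell)}{\ell}\sum_{\substack{d\geqslant 1\\ (d,\ell)=1}}\frac{\mu(d)}{d^2}+O_\ell(Q\log Q).
\]

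The final step is to recognize the constant. The Dirichlet series evaluates to $\sum_{(d,\ell)=1}\mu(d)/d^2=\prod_{p\nmid\ell}(1-p^{-2})=\frac{1}{\zeta(2)}\prod_{p\mid\ell}(1-p^{-2})^{-1}$, and combined with $\frac{\phi(\ell)}{\ell}=\prod_{p\mid\ell}(1-p^{-1})$ the per-prime factors collapse to $\prod_{p\mid\ell}(1+p^{-1})^{-1}$, so the coefficient is exactly $\frac{1}{\zeta(2)}\prod_{p\mid\ell}(1+p^{-1})^{-1}=C(\ell)$. This yields $S(Q)=\frac{C(\ell)}{2}Q^2+O_\ell(Q\log Q)$, and dividing by $\ell$ gives the claimed $N_Q^{(\ell)}=\frac{C(\ell)}{2\ell}Q^2+O_\ell(Q\log Q)$. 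The only genuinely delicate point is the bookkeeping of error terms: one must check that truncating the $d$-sum and summing the secondary $O_\ell(Q/d)$ terms both stay within $O_\ell(Q\log Q)$, and that the $\sum 2^{\omega(q)}$ estimate is what fixes the $Q\log Q$ (rather than $Q$) size of the remainder. The arithmetic identification of the Euler product with $C(\ell)$ is routine once the sums are arranged as above.
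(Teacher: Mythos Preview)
Your proof is correct and follows essentially the same route as the paper: both substitute $a=\ell b$ to reduce to $\sum_{(q,\ell)=1,\,q\leqslant Q}\frac{\varphi(q)}{\ell}+O(\text{divisor-type error})$ and then evaluate the restricted totient sum as $\frac{C(\ell)}{2}Q^2+O_\ell(Q\log Q)$. The only cosmetic difference is that the paper invokes the appendix formulas \eqref{A1}--\eqref{A2} for the inner count and the totient average, whereas you unfold both steps by hand via $\phi=\mu*\mathrm{id}$ and the Euler-product identification.
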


\begin{proof}
It is clear that
$$ N_Q^{(\ell)} = \# \FF_Q^{(\ell)}
= \sum_{\substack{q=1\\(\ell,q)=1}}^Q \sum_{\substack{a=1 \\ (a,q)=1\\ \ell \mid a}}^q 1 .
$$
Letting $k=\frac{a}{\ell}$ and noting that whenever $(\ell,q)=1$
we have $(k\ell,q)=1$ if and only $(k,q)=1$, the sum above becomes
$$
\sum_{\substack{q=1\\(\ell,q)=1}}^Q \sum_{\substack{k=1\\(k,q)=1}}^{[q/\ell]} 1.
$$
Standard M\" obius summation, cf. \eqref{A1} and \eqref{A2}, and $\sum_{q=1}^Q\sigma_0(q)=O(Q\log Q)$, where $\sigma_0(q)=\sum_{d\mid q} 1$, yield
\[
\sum_{\substack{q=1\\(\ell,q)=1}}^Q \sum_{\substack{k=1\\(k,q)=1}}^{[q/\ell]} 1
= \sum_{\substack{q=1\\(\ell,q)=1}}^Q \left(\frac{\varphi(q)}{q}\cdot\frac{q}{\ell} + O (\sigma_0(q))\right)
=\frac{C(\ell)}{2\ell} Q^2+O_\ell (Q\log Q),
\]
concluding the proof.
\end{proof}

This also establishes the first equality in \eqref{1.2} because
\[
\# \wFF_{Q,\ell}=\# \FF_Q -\# \FF_Q^{(\ell)} \sim \bigg( \frac{1}{2\zeta(2)}-\frac{C(\ell)}{2\ell}\bigg) Q^2.
\]

Letting $\xi>0$ and $Q,\ell\in \N$ with $\ell\geqslant 2$, we set out to asymptotically estimate the number $N^{(\ell)}_Q (\xi)$ as $Q\rightarrow\infty$.
Now if $\gamma=\frac{a}{q}$ and $\gamma^\prime = \frac{a'}{q'}$ are consecutive elements in $\mathcal{F}_Q$ and $\gamma^\prime \in\mathcal{F}_Q^{(\ell)}$,
then $1=a'q-aq'\equiv -aq' \pmod{\ell}$, which implies that $(a,\ell)=1$, and thus $\gamma\notin\mathcal{F}_Q^{(\ell)}$.
Similarly, if $\gamma\in\mathcal{F}_Q^{(\ell)}$, then $\gamma^\prime \notin\mathcal{F}_Q^{(\ell)}$; and so no two consecutive elements of
$\mathcal{F}_Q$ belong simultaneously to $\mathcal{F}_Q^{(\ell)}$. This means that if $\gamma <\gamma^\prime$ are consecutive elements in
$\wFF_{Q,\ell}$, then two cases can occur:

\noindent{\em Case 1.} $\gamma$ and $\gamma^\prime$ are consecutive elements in $\mathcal{F}_Q$ and $\gamma,\gamma^\prime \notin \mathcal{F}_Q^{(\ell)}$.
In this case the number of gaps in consecutive fractions of length $\leqslant \frac{\xi}{Q^2}$ is equal to $\NN_1 (Q,\xi)=N_Q (\xi)-M_1 (Q,\xi)- M_2 (Q,\xi)$,
where $M_1 (Q,\xi)$ is the number of pairs $(\gamma ,\gamma^\prime)\in \GG_Q (\xi)$ with $\gamma^\prime \in\mathcal{F}_Q^{(\ell)}$, and $M_2 (Q,\xi)$
the number of pairs $(\gamma,\gamma^\prime)\in \GG_Q (\xi)$ with $\gamma\in\mathcal{F}_Q^{(\ell)}$.

The number $N_Q (\xi)$ is estimated employing the well-known fact that $\gamma<\gamma^\prime$ are consecutive elements in $\FF_Q$ if and only if
$q,q'\in\{1,\ldots,Q\}$, $q+q'>Q$, and $a'q-aq'=1$. Furthermore, $\frac{a'}{q'} - \frac{a}{q}=\frac{1}{qq'}$, and so
$\frac{a'}{q'}-\frac{a}{q} \leqslant \frac{\xi}{Q^2}$ if and only if $qq'\geqslant \frac{Q^2}{\xi}$. This establishes the equality
\begin{equation}\label{2.1}
\begin{split}
N_Q (\xi) & =\#\bigg\{(q,q')\in\mathbb{N}^2:q,q'\leqslant Q,q+q'>Q,(q,q')=1,qq'\geqslant \frac{Q^2}{\xi}\bigg\} \\
& =\sum_{q'=1}^Q \sum_{\substack{q\in I_Q (q^\prime) \\(q,q')=1}} 1,
\end{split}
\end{equation}
where $I_Q (q^\prime)=Q \cdot \big[ \eta_Q (q^\prime),1\big]$ and $\eta_Q (q^\prime)=\max \big\{1- \frac{q'-1}{Q}, \frac{Q}{\xi q'}\big\}$.

Standard M\" obius summation provides
\[
\begin{split}
N_Q (\xi) & =\sum_{q'=1}^Q\left(\frac{\varphi(q')}{q'}|I_Q (q')|+O(\sigma_0(q'))\right)=\sum_{q'=1}^Q\frac{\varphi(q')}{q'}
|I_Q (q')|+O(Q\log Q) \\ &
=\frac{A(\xi)}{\zeta(2)}Q^2+O(Q\log Q),
\end{split}
\]
where
\begin{equation}\label{2.2}
\begin{split}
A(\xi) & =\bigg| \bigg\{ (x,y) \in (0,1]^2 : x+y>1, xy\geqslant  \frac{1}{\xi}\bigg\} \bigg| \\ &
=\begin{cases} 0 & \mbox{\rm if $0<\xi\leqslant 1$} \\
1-\frac{\log\xi+1}{\xi} & \mbox{\rm if $1\leqslant \xi\leqslant 4$} \\
1-\frac{1}{\xi}-\frac{1}{2}\sqrt{1-\frac{4}{\xi}}+\frac{2}{\xi}\log\Big( \frac{1+\sqrt{1-4/\xi}}{2}\, \Big)
& \mbox{\rm if $\xi \geqslant 4.$}
\end{cases}
\end{split}
\end{equation}

Next, we estimate $M_1 (Q,\xi)$. Clearly $M_1 (Q,\xi)=0$ if $\xi\in(0,1]$, and so assume $\xi>1$. If $\frac{a'}{q'}\in\mathcal{F}_Q^{(\ell)}$,
then $(a',q')=1$ and $\ell\mid a'$. Since $(a^\prime,q^\prime)=1$, we have $(\ell,q')=1$. Therefore, we have to count all
pairs of integers $(q,q')\in (0,Q]^2$ with $q+q'>Q$, $(q,q')=1$, $qq'\geqslant \frac{Q^2}{\xi}$, in which $(\ell,q')=1$, and there is an
$a'\in\{1,\ldots,q'\}$ such that $a'q\equiv1\pmod{q'}$ and $\ell\mid a'$. As a result,
after also letting $k= \frac{a'}{\ell}$, $\ell \overline{\ell} \equiv 1 \pmod{q^\prime}$, $M_1 (Q,\xi)$ can be expressed as
\begin{equation}\label{2.3}
M_1 (Q,\xi)=\sum_{\substack{q'=1\\(\ell,q')=1}}^Q\sum_{\substack{q\in I_Q (q')\\(q,q')=1}}
\sum_{\substack{a'=1\\a'q\equiv1 \hspace{-6pt}\pmod{q'}\\ \ell\mid a'}}^{q'}1
= \sum_{\substack{q'=1 \\ (\ell,q')=1}}^Q\sum_{\substack{q\in I_Q (q')\\ (q,q')=1}} \sum_{\substack{k\in (0,q'/\ell]\\ kq\equiv \overline{\ell}\hspace{-6pt}\pmod{q'}}} 1 .
\end{equation}
Now by \eqref{2.3} and \eqref{A4}, for any $\delta>0$,
\[
\begin{split}
M_1 (Q,\xi) & =\sum_{\substack{q'=1\\(\ell,q')=1}}^Q \left(\frac{\varphi(q')}{q^{\prime ^2}} \iint_{I_Q (q')\times[0,q'/\ell]} dx \, dy + O_\delta \big(q^{\prime 1/2+\delta}\big)\right) \\  &
=\frac{1}{\ell} \sum_{\substack{q'=1 \\ (\ell,q')=1}}^Q \frac{\varphi(q')}{q'} | I_Q (q')|+O_{\ell,\delta} \big( Q^{3/2 +\delta}\big).
\end{split}
\]
Then using \eqref{A2}, we have
\begin{equation*}
\begin{split}
\frac{1}{\ell}\sum_{\substack{q'=1\\(\ell,q')=1}}^Q \frac{\varphi(q')}{q'} |I_Q (q')| &  =
\frac{C(\ell)}{\ell}\int_0^Q |I_Q (q')|\, dq^\prime +O_\ell (Q\log Q) \\ & =\frac{C(\ell)}{\ell}  A(\xi) Q^2+O_\ell (Q\log Q).
\end{split}
\end{equation*}
This proves $M_1 (Q,\xi)\sim \frac{C(\ell)}{\ell} A(\xi) Q^2$ if $\xi>1$. The formula for $M_2 (Q,\xi)$ is analogous and we infer
\begin{equation}\label{2.4}
\begin{split}
\NN_1(Q,\xi) & = N_Q (\xi) - M_1(Q,\xi)-M_2(Q,\xi) \\
& = \bigg( \frac{1}{\zeta(2)} - \frac{2C(\ell)}{\ell} \bigg) A(\xi) Q^2 +O_{\ell,\delta} \big( Q^{3/2+\delta}\big) .
\end{split}
\end{equation}

\noindent{\em Case 2.} There is exactly one fraction in $\mathcal{F}_Q$ between $\gamma$ and $\gamma^\prime$ that belongs to $\mathcal{F}_Q^{(\ell)}$.
It is more convenient to change $\gamma^\prime$ to $\gamma^{\prime\prime}$, so we shall consider triples $\gamma < \gamma^\prime < \gamma^{\prime\prime}$
of elements in $\FF_Q$ with $\gamma^\prime \in \FF_Q^{(\ell)}$ and with $\gamma^{\prime\prime}-\gamma \leqslant \frac{\xi}{Q^2}$.
The equalities
\begin{equation}\label{2.5}
\frac{a^{\prime\prime}+a}{a^\prime}=\frac{q^{\prime\prime}+q}{q^\prime} =K
\quad \mbox{\rm and} \quad \gamma^{\prime\prime}-\gamma =\frac{K}{qq^{\prime\prime}},
\end{equation}
involving the number
\[
K=\nu_2 (\gamma)=\bigg[ \frac{Q+q}{q^\prime}\bigg],
\]
called the \emph{index} of the Farey fraction $\gamma = \frac{a}{q} \in\FF_Q$,
will be useful here. In particular, the inequality $\gamma^{\prime\prime}-\gamma \leqslant \frac{\xi}{Q^2}$ enforces $K\leqslant \xi$.
Consider the set $J_{Q,K,\xi}(q^\prime)$ of elements $q\in (Q-q^\prime ,Q] \cap \big[ Kq^\prime -Q,(K+1)q^\prime -Q\big)$ that satisfy
$\frac{K}{q(Kq^\prime -q)} \leqslant \frac{\xi}{Q^2}$. This set is either empty, an interval, or the union of two intervals.
The number $\NN_2(Q,\xi)$ of gaps of consecutive elements in $\wFF_{Q,\ell}$ of length $\leqslant \frac{\xi}{Q^2}$ that arise in this case
can now be expressed, with $k$ and $\overline{\ell}$ as in \eqref{2.3}, as
\begin{equation}\label{2.6}
\begin{split}
\NN_2 (Q,\xi) & = \sum_{1\leqslant K\leqslant \xi} \sum_{q^\prime \leqslant Q} \sum\limits_{\substack{q\in J_{Q,K,\xi}(q^\prime) \\ (q,q^\prime)=1}}
\sum\limits_{\substack{a^\prime =1 \\ a^\prime q \equiv 1 \hspace{-6pt} \pmod{q^\prime} \\ \ell \mid a^\prime}}^{q^\prime} 1 \\ &
=\sum_{1\leqslant K\leqslant \xi} \sum\limits_{\substack{q^\prime \leqslant Q\\ (\ell,q^\prime)=1}}
\sum\limits_{\substack{q\in J_{Q,K,\xi}(q^\prime) \\ k\in (0,q^\prime /\ell] \\ kq\equiv
\overline{\ell}\hspace{-6pt} \pmod{q^\prime}}} 1.
\end{split}
\end{equation}

We will employ elementary properties of the area preserving invertible transformation $T:\TT \rightarrow\TT$ defined \cite{BCZ} by
\begin{equation}\label{2.7}
T(x,y)=\big( y,\kappa (x,y)y-x\big) ,\qquad (x,y)\in \TT,\qquad \mbox{\rm where}
\end{equation}
\[
\TT =\{ (x,y) \in (0,1]^2: x+y>1 \} \quad \mbox{\rm and} \quad
\kappa (x,y)=\bigg[ \frac{1+x}{y}\bigg].
\]
An important connection with Farey fractions is given by the equality
\begin{equation}\label{2.8}
T\bigg( \frac{q_i}{Q},\frac{q_{i+1}}{Q} \bigg) =\bigg( \frac{q_{i+1}}{Q},\frac{q_{i+2}}{Q}\bigg) .
\end{equation}
For each $K\in \N$ consider the subset $\TT_K=\{ (x,y)\in\TT : \kappa (x,y)=K\}$ of $\TT$,
described by the inequalities $0< x,y\leqslant 1$, $x+y>1$, and $Ky-1\leqslant x< (K+1)y-1$.

Denote $V_{Q,K,\xi}(q^\prime)=\vert J_{Q,K,\xi} (q^\prime)\vert$, so $V_{Q,K,\xi} (Qu)=Q W_{K,\xi} (u)$, where
\[
W_{K,\xi}(u) =\big| \{ v: (v,u)\in \TT_K \} \cap \{ v: K \leqslant \xi v (Ku-v)\} \big| .
\]
Similar arguments as in the proof of \eqref{2.4} lead to
\[
\begin{split}
\NN_2 (Q,\xi) & = \frac{C(\ell)}{\ell} \, Q^2 \sum_{K\leqslant \xi} \int_0^1 W_{K,\xi} (u)\, du +O_{\ell,\delta,\xi} (Q^{3/2+\delta}) \\  &
= \frac{C(\ell)}{\ell} Q^2 \sum_{K\leqslant \xi} A_K (\xi) + O_{\ell,\delta,\xi} (Q^{3/2+\delta}) ,
\end{split}
\]
uniformly in $\xi$ on compact subsets of $[0,\infty)$, where
\[
A_K (\xi) =\operatorname{Area} \big(\Omega_K (\xi)\big), \qquad
\Omega_K (\xi) =\bigg\{ (v,u)\in \TT_K :  u \geqslant f_{K,\xi}(v):=\frac{v}{K}+\frac{1}{\xi v} \bigg\} .
\]

Summarizing, we have shown
\begin{equation*}
N^{(\ell)}_Q (\xi) = G_\ell (\xi) Q^2+O_{\ell,\xi,\delta} (Q^{3/2+\delta}) \qquad (\mbox{\rm as $Q\rightarrow \infty$}),
\end{equation*}
where
\begin{equation}\label{2.9}
G_\ell (\xi) =\bigg( \frac{1}{\zeta(2)} -\frac{2C(\ell)}{\ell} \bigg) A(\xi) +
\frac{C(\ell)}{\ell} \sum_{K\leqslant \xi} A_K (\xi) .
\end{equation}
Taking also into account Lemma \ref{L1} we conclude that the gap limiting measure of
$(\wFF_{Q,\ell})$ exists and its distribution function is given by
\begin{equation*}
\widetilde{F}_\ell (\xi) =\int_0^\xi d\tilde{\nu}_\ell =\frac{1}{\widetilde{K}_\ell} G_\ell \bigg(
\frac{\xi}{\widetilde{K}_\ell} \bigg) .
\end{equation*}

\subsection{Explicit expressions of $A_K (\xi)$}

\subsubsection{$K=1$}
$\TT_1$ is the triangle with vertices $(0,1)$, $(1,1)$, and $\big( \frac{1}{3},\frac{2}{3}\big)$.
When $\xi\leqslant 4$ we have $f_{1,\xi}(v) \geqslant 1$ for every $v>0$, so $A_1 (\xi) =0$. When $\xi >4$ we have
\[
A_1 (\xi) =\int_{u_1}^{u_2} \bigg( 1-\max \bigg\{ f_{1,\xi}(v) ,1-v,\frac{v+1}{2}\bigg\} \bigg) dv,
\]
where $u_{1,2}=\frac{1}{2} \big( 1\pm \sqrt{1-\frac{4}{\xi}}\, \big)$, $0<u_1<u_2<1$, are
the solutions of $f_{1,\xi}(v)=1$. When $4<\xi\leqslant 8$ we have $f_{1,\xi}(v) \geqslant \max\big\{ 1-v,\frac{1+v}{2}\big\}$,
so $A_1 (\xi)$ is the area of the region defined by $v\in [u_1,u_2]$ and $u\in [f_{1,\xi}(v), 1]$.
When $\xi \geqslant 8$ let $v_{1,2}=\frac{1}{4} \big( 1\pm \sqrt{1-\frac{8}{\xi}}\, \big)$, $v_1 <v_2$,
denote the solutions of $f_{1,\xi}(v)=1-v$ and by $w_{1,2}:=2v_{1,2}$ the solutions of $f_{1,\xi}(w)=\frac{w+1}{2}$.
If $8\leqslant \xi\leqslant 9$, then $0<u_1<v_1\leqslant v_2 \leqslant \frac{1}{3} \leqslant w_1 \leqslant w_2 <u_2 <1$.
In this case $A_1(\xi)$ is the area of the region described by
$v\in [u_1,v_1] \cup [v_2,w_1]\cup [w_2,u_2]$ and $u\in [f_{1,\xi}(v),1]$,
$v\in [v_1,v_2]$ and $u\in [1-v,1]$, or $v\in [w_1,w_2]$ and
$u\in \big[ \frac{1+v}{2},1\big]$. Finally, if $\xi>9$, then $0<u_1<v_1<w_1 <\frac{1}{3}<v_2<w_2<u_2<1$, and
$A_1(\xi)$ is the area of the region described by $v\in [u_1,v_1] \cup [w_2,u_2]$ and $u\in [f_{1,\xi}(v),1]$,
or $v\in \big[ v_1,\frac{1}{3}\big]$ and $u\in [1-v,1]$, or $v\in \big[ \frac{1}{3},w_2]$ and $u\in\big[ \frac{1+v}{2},1\big]$.
A plain calculation gives
\[
A_1 (\xi) =\begin{cases} 0 & \mbox{\rm if $0<\xi\leqslant 4$} \\
\frac{1}{2}\sqrt{1-\frac{4}{\xi}} -\frac{1}{\xi} \ln \big( \frac{u_2}{u_1}\big) & \mbox{\rm if $4\leqslant \xi \leqslant 8$} \\
\frac{1}{2}\sqrt{1-\frac{4}{\xi}} -\frac{1}{\xi} \ln \big( \frac{u_2}{u_1}\big) - \frac{1}{2} \sqrt{1-\frac{8}{\xi}}
+\frac{2}{\xi} \ln \big( \frac{v_2}{v_1}\big)  & \mbox{\rm if $8\leqslant \xi \leqslant 9$} \\
\frac{1}{2}\sqrt{1-\frac{4}{\xi}} - \frac{1}{\xi} \ln \big( \frac{u_2}{u_1}\big)
- \frac{1}{4} \sqrt{1-\frac{8}{\xi}} -\frac{1}{12} +\frac{1}{\xi} \ln \big( \frac{2v_2}{v_1}\big)
& \mbox{\rm if $\xi \geqslant 9$.}
\end{cases}
\]

\begin{figure}[ht]
\begin{center}
\includegraphics*[scale=0.32, bb=0 0 360 380]{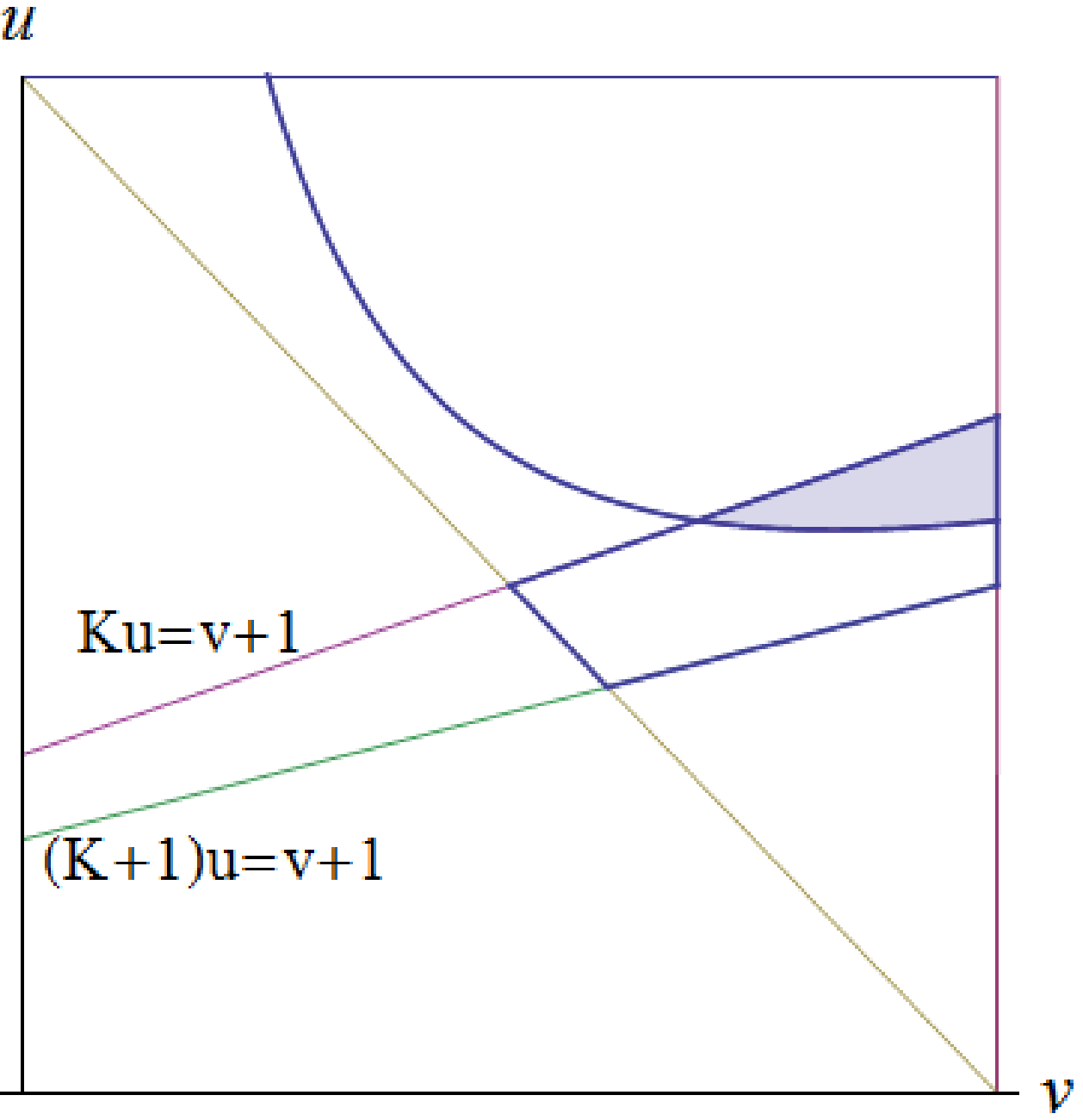}
\includegraphics*[scale=0.32, bb=0 0 360 380]{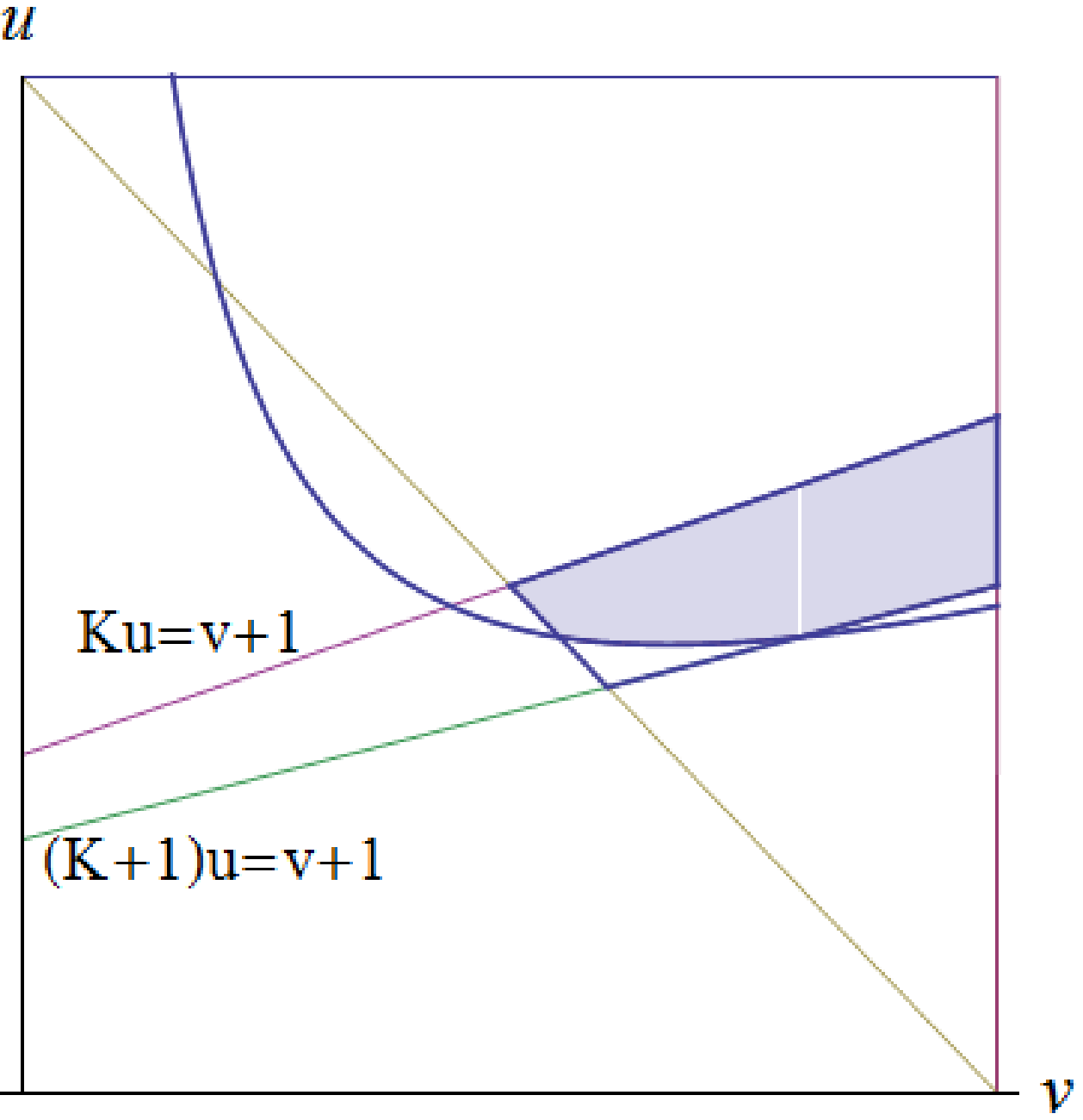}
\includegraphics*[scale=0.32, bb=0 0 360 380]{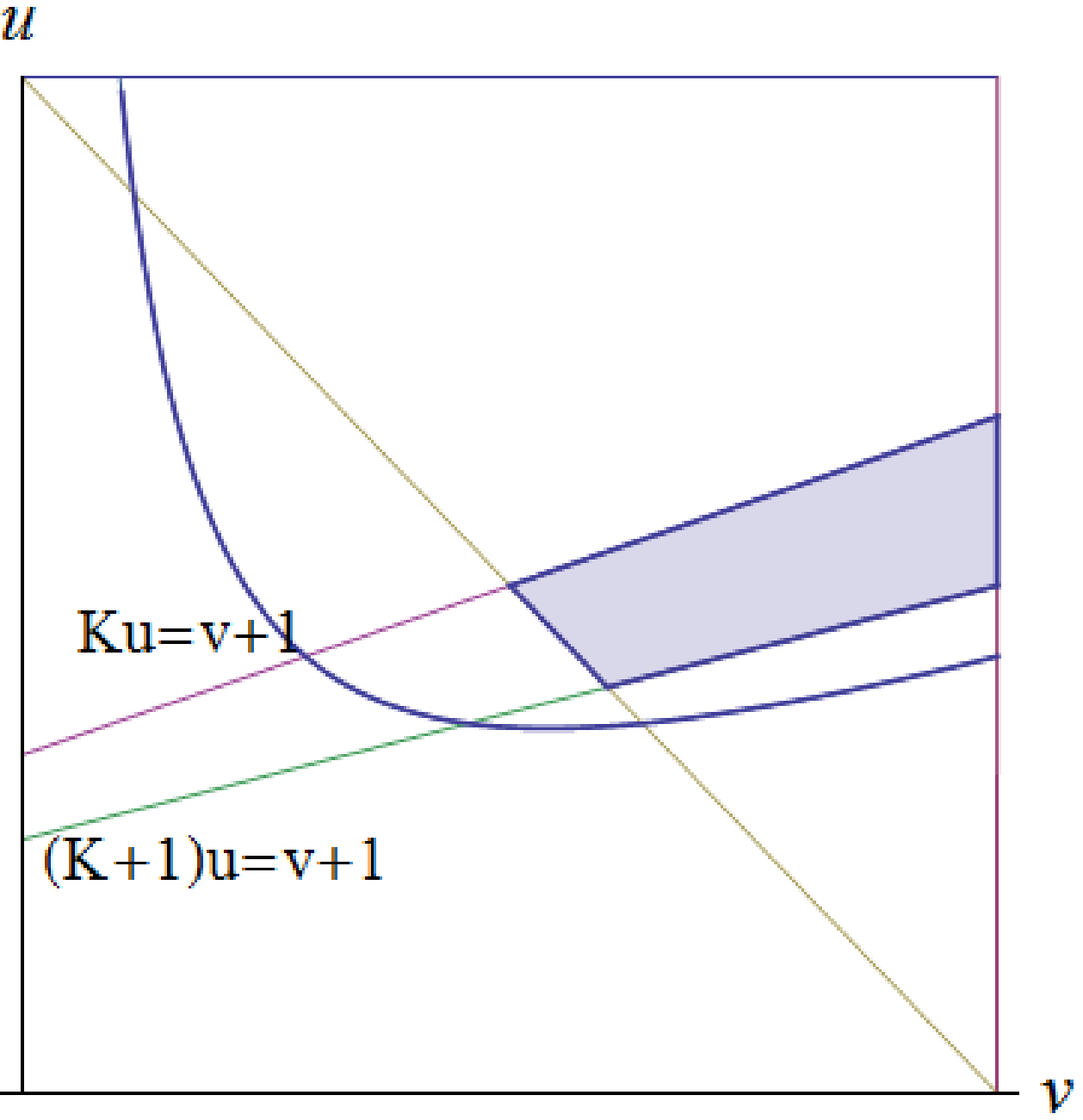}
\caption{The intersection between the quadrilateral $\TT_K$ and the curve $u=f_{K,\xi}(v)$ when $K< \xi <\frac{K(K+1)}{K-1}$,
$\frac{K(K+1)}{K-1} \leqslant \xi < \frac{(K+2)^2}{K}$, and respectively $\xi\geqslant \frac{(K+2)^2}{K}$}
\label{Figure1}
\end{center}
\end{figure}

\subsubsection{$K\geqslant 2$} Note that $f_{K,\xi}(1)=f_{K,\xi}\big( \frac{K}{\xi}\big)=\frac{1}{K}+\frac{1}{\xi}$.
The situation is described by Figure \ref{Figure1}.
The solution of $f_{K,\xi}(v)=\frac{v+1}{K}$ is $v=\frac{K}{\xi}$, so the curve $u=f_{K,\xi}(v)$ intersects the
upper edge of $\TT_K$ if and only if $K<\xi < \frac{K(K+1)}{K-1}$, in which case it does not intersect the two lower edges of $\TT_K$ and
\[
A_K (\xi)=\int_{K/\xi}^1 \bigg( \frac{v+1}{K}-f_{K,\xi}(v)\bigg) dv
=\int_{K/\xi}^1 \bigg( \frac{1}{K}-\frac{1}{\xi v}\bigg)\, dv .
\]
The solution of $f_{K,\xi}\big( \frac{K}{K+2}\big) > \frac{2}{K+2}$ is $\xi < \frac{(K+2)^2}{K}$. This shows that when
$\frac{K(K+1)}{K-1} \leqslant \xi < \frac{(K+2)^2}{K}$ the graph of $u=f_{K,\xi}(u)$ intersects the segment $u=1-v$, $v\in \big[
\frac{K-1}{K+1},\frac{K}{K+2}\big]$, exactly when $v=v_K=\frac{K}{2(K+1)} \Big( 1+\sqrt{1-\frac{4}{\xi} (1+\frac{1}{K})}\,\Big)$,
and the segment $u=\frac{v+1}{K+1}$, $v\in \big[ \frac{K}{K+2},1\big]$, exactly at $v=w_K=\frac{K}{2} \Big( 1-\sqrt{1-\frac{4}{\xi} \big( 1+\frac{1}{K}\big)}\,\Big)$,
so in this case
\[
A_K (\xi) =\operatorname{Area}(\TT_K) -\int_{v_K}^{w_K} f_{K,\xi}(v)\, dv +\int_{v_K}^{K/(K+2)} (1-v)\, dv +
\int_{K/(K+2)}^{w_K} \frac{v+1}{K+1}\, dv .
\]
Finally, when $\xi > \frac{(K+2)^2}{K}$, the graph of $u=f_{K,\xi}(v)$ does not intersect any of the
edges of $\TT_K$ and
\[
A_K (\xi) =\operatorname{Area} (\TT_K) .
\]
In summary, a quick calculation leads to
\begin{equation*}
A_K (\xi)=\begin{cases}
0 & \mbox{\rm if $0\leqslant \xi\leqslant K$} \\
\frac{1}{K}-\frac{1}{\xi}-\frac{1}{\xi} \ln \big( \frac{\xi}{K}\big)
& \mbox{\rm if $K\leqslant\xi\leqslant \frac{K(K+1)}{K-1}$} \\
\frac{K^3+8}{2K(K+1)(K+2)}-\frac{1}{\xi} \ln \big( \frac{w_K}{v_K}\big) -\frac{v_K}{2} +\frac{w_K}{2(K+1)}
& \mbox{\rm if $\frac{K(K+1)}{K-1} \leqslant \xi \leqslant \frac{(K+2)^2}{K}$} \\
\frac{4}{K(K+1)(K+2)} & \mbox{\rm if $\xi \geqslant \frac{(K+2)^2}{K}$.}
\end{cases}
\end{equation*}

\begin{figure}[ht]
\begin{center}
\includegraphics*[scale=0.6, bb=0 0 280 170]{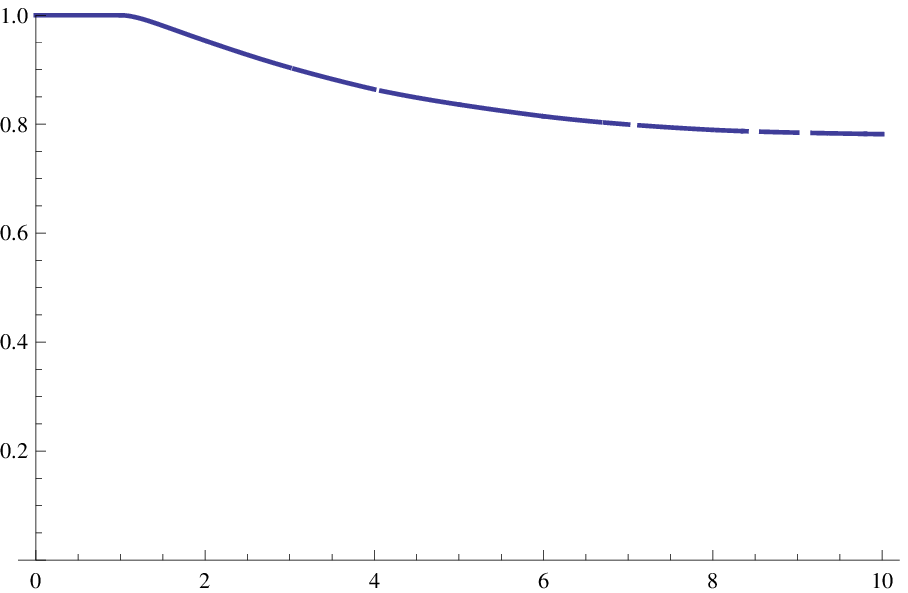}
\includegraphics*[scale=0.65, bb=0 0 280 160]{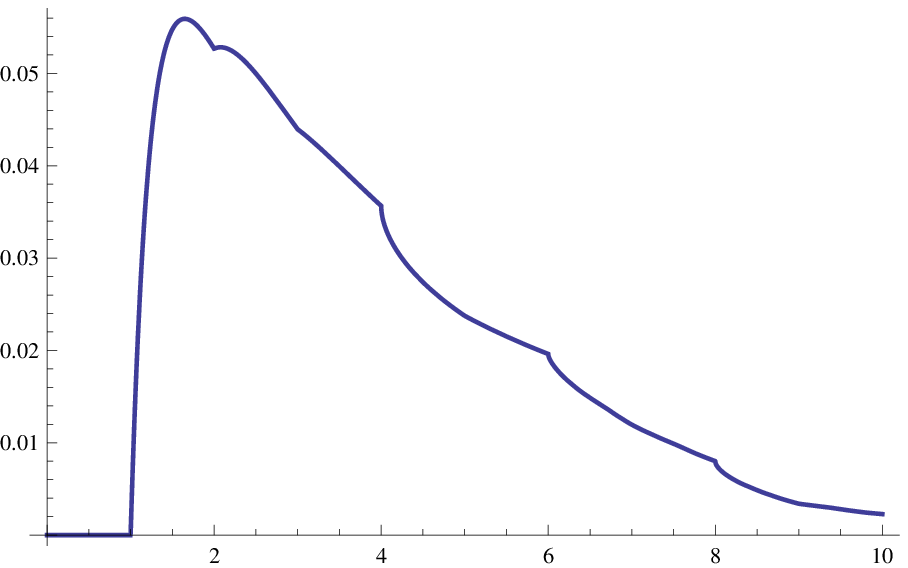}
\caption{The repartition function $1-G_3(\xi)$ and the density $-G_3^\prime (\xi)$}
\label{Figure2}
\end{center}
\end{figure}

\section{Consecutive elements in $\FF_Q$ with denominator relatively prime to $d$}
In this section we comment on the first two steps in the proof of \eqref{1.1} from \cite{BH}.

\subsection{Upper bounds on the number of consecutive Farey fractions whose denominators are not relatively prime to $d$}
One of the key steps in the proof of \eqref{1.1} in \cite{BH} is to show that
for any $Q$ and any $d$, any string of consecutive elements in $\FF_Q$ of length $4d^3$ contains at least one element whose denominator is coprime with $d$.
Next we provide two arguments which show that the upper bound $L(d)$ should actually be much smaller than $4d^3$.

\begin{lemma}\label{L2}
If $\omega (d) \leqslant \min \{ p\in {\mathcal P}: p\mid d\}$, then $L(d)\leqslant 4\omega (d)^3$.
\end{lemma}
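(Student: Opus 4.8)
The plan is to bound the length of a maximal \emph{bad string}, i.e.\ a block $\gamma_i<\cdots<\gamma_{i+L}$ of consecutive elements of $\FF_Q$ none of whose denominators is coprime to $d$, and to show that such a block contains at most $4\omega(d)^3$ fractions; by the definition of $L(d)$ this is exactly the assertion $L(d)\leqslant 4\omega(d)^3$. Write $r=\omega(d)$ and let $p_1<\cdots<p_r$ be the prime divisors of $d$, so that the hypothesis reads $r\leqslant p_1$. To each position $j$ of a bad string one may assign a prime $p_{t(j)}\mid d$ with $p_{t(j)}\mid q_j$, and the heart of the matter is that for a \emph{fixed} prime $p$ the set $S_p=\{\,j:p\mid q_j\,\}$ is sparse. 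First, since $a_{j+1}q_j-a_jq_{j+1}=1$ forces $(q_j,q_{j+1})=1$, no two adjacent positions lie in the same $S_p$; this already yields the trivial bound $L(p^a)=1$. Thus covering a bad string amounts to covering a block of consecutive integers by the $r$ independent sets $S_{p_1},\dots,S_{p_r}$, and (since a path can always be $2$-coloured) mere independence is not enough — a genuine quantitative sparsity estimate for each $S_p$ is needed.

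The tool for the sparsity estimate is the three--term recurrence $q_{j+1}=\kappa_j q_j-q_{j-1}$ underlying \eqref{2.7}--\eqref{2.8}. Reducing modulo $p$, if $p\mid q_j$ then $q_{j+1}\equiv-q_{j-1}$, and if in addition $p\mid q_{j+2}$ then $p\mid\kappa_{j+1}$, i.e.\ $\kappa_{j+1}\geqslant p$; since $\kappa_{j+1}=\big[(Q+q_j)/q_{j+1}\big]$, this pins the intermediate denominator into the small range $q_{j+1}\leqslant 2Q/p$. Iterating, two consecutive distance--$2$ steps in $S_p$ would sandwich a denominator exceeding $Q\big(1-\tfrac2p\big)$ between two denominators $\leqslant 2Q/p$, which is impossible once $p\geqslant 7$. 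The recurrence therefore forces small intermediate denominators wherever $S_p$ is locally dense, and the structural claim I would aim for is a window bound of the form $\#\big(S_p\cap[m,m+N)\big)\leqslant N/p+E_p$ for an error $E_p$ whose uniform control over all $Q$ and all windows is the technical crux.

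Granting such a per--prime bound, the conclusion follows by a counting step. If the bad string occupies the positions $[m,m+N)$, then every one of its $N$ positions lies in some $S_{p_t}$, whence
\[
N\ \leqslant\ \sum_{t=1}^{r}\#\big(S_{p_t}\cap[m,m+N)\big)\ \leqslant\ N\sum_{t=1}^{r}\frac1{p_t}+\sum_{t=1}^{r}E_{p_t}.
\]
Here the hypothesis $r\leqslant p_1$ enters decisively: the $p_t$ being distinct primes all $\geqslant p_1\geqslant r$, one has $\sum_t 1/p_t<r/p_1\leqslant 1$, so the coefficient $c:=1-\sum_t 1/p_t$ is strictly positive. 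Rearranging the displayed inequality gives $N\leqslant c^{-1}\sum_t E_{p_t}$, and it is the interplay of this gap $c$ with the error terms $E_{p_t}$ that is meant to produce the explicit bound $4\omega(d)^3$, the cubic exponent and the constant $4$ emerging from the same bookkeeping as in the estimate $L(d)\leqslant 4d^3$ of \cite{BH}, now driven by $\omega(d)$ rather than by $d$.

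I expect the second step to be the main obstacle. What is available cheaply, from the M\"obius summation of Section 2, is only the \emph{averaged} density $\tfrac{p-1}{p^2}<\tfrac1p$ of $p$--divisible denominators over all of $\FF_Q$; what the argument requires instead is a \emph{uniform, worst-case} local count valid inside a single block. Equivalently, one must control the returns of the orbit $\{(q_j/Q,q_{j+1}/Q)\}$ under $T$ to the lines $x\in\frac1p\Z$, bounding how often the recurrence can manufacture the small intermediate denominators that local clustering of $S_p$ demands. This combinatorial-number-theoretic input, together with a sharp value of the error $E_p$, is precisely what governs the final constant, which is why the authors single out the case $\omega(d)\geqslant 3$ as delicate.
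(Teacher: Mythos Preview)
Your plan diverges from the paper's argument, and the step you flag as the obstacle is a genuine gap. The paper never attempts a per-prime window estimate. It fixes $i_1<i_2$ inside the bad block so that $q_{i_1},q_{i_2}$ are minimal among $q_{i_1},\dots,q_{i_2}$; the Stern--Brocot structure then gives $\{q_j:i_1<j<i_2\}\subset\{mq_{i_1}+nq_{i_2}:m,n\geqslant 1,\ mq_{i_1}+nq_{i_2}\leqslant Q\}$. The hypothesis is used through a direct pigeonhole on the progression $q_{i_2}+\ell q_{i_1}$, $1\leqslant\ell\leqslant L:=\omega(d)$: if each term shared a prime with the part $d_1$ of $d$ coprime to $q_{i_1}$ (which has fewer than $\omega(d)$ prime factors), two of them would share the \emph{same} prime $p_{i_0}$, forcing $p_{i_0}\mid(\ell'-\ell)$ with $0<\ell'-\ell<L\leqslant p_1\leqslant p_{i_0}$, impossible. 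Hence some $m_1\leqslant L$ gives $(m_1q_{i_1}+q_{i_2},d)=1$, so $m_1q_{i_1}+q_{i_2}>Q$; with the symmetric inequality this traps the intermediate $q_j$ in $\{mq_{i_1}+nq_{i_2}:1\leqslant m,n\leqslant L\}$, whence $i_2-i_1\leqslant L^2$. The passage from this to $4L^3$ is then verbatim the second half of Step~(i) in \cite{BH} with $d$ replaced by $L=\omega(d)$.

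Your window bound $\#(S_p\cap[m,m+N))\leqslant N/p+E_p$, by contrast, is left unproved, and the leading term $N/p$ is the wrong scale. Independence in the path only gives density $\tfrac12$; your recurrence observation does rule out $\{j,j+2,j+4\}\subset S_p$ once $p\geqslant 7$, but combined with independence that caps the local density only at $\tfrac25$ (gap pattern $2,3,2,3,\dots$), not at $1/p$. With density $\tfrac25$ the summation $N\leqslant \tfrac25\,\omega(d)\,N+\sum_t E_{p_t}$ already fails to isolate $N$ as soon as $\omega(d)\geqslant 3$, so the scheme cannot close without a far stronger uniform sparsity theorem for $S_p$ than anything the recurrence heuristics supply. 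The paper's pigeonhole on the mediant grid sidesteps this entirely: the hypothesis $\omega(d)\leqslant p_1$ is used not to make $\sum_t 1/p_t<1$, but to ensure that among $\omega(d)$ consecutive translates $q_{i_2}+\ell q_{i_1}$ no prime of $d_1$ can repeat.
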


\begin{proof}
We first revisit the proof of the first part of Step (i) in the proof of Theorem 1 in \cite{BH} (pp. 210--211).
Suppose $Q$ and $i_1<i_2$ are chosen such that, for every $j\in [i_1,i_2]$,
\[
\max\{ q_{i_1},q_{i_2}\} \leqslant q_j \quad \mbox{\rm and} \quad
(q_j,d)>1.
\]
Then $(q_{i_1},q_{i_2})=1$ and
\begin{equation}\label{3.1}
\{ q_j :i_1 <j<i_2\} \subset \{ mq_{i_1}+nq_{i_2} : m,n\in {\mathbb N} , (m,n)=1, mq_{i_1}+nq_{i_2} \leqslant Q\} .
\end{equation}

Let $d_1=p_1^{\alpha_1} \cdots p_\omega^{\alpha_\omega}$, with $p_1 <\cdots < p_\omega$ primes, be the largest divisor of $d$ which is coprime to $q_{i_1}$.
Then $\omega <\omega (d) \leqslant \min\{ p\in {\mathcal P}: p\mid d\} \leqslant p_1$. Fix some integer $L$ with $\omega +1 \leqslant L \leqslant p_1$.
We claim that there exists $m_1\in\N$, $m_1 \leqslant L$ such that $(m_1 q_{i_1}+q_{i_2},d_1)=1$. If not, then
$(\ell q_{i_1}+q_{i_2}, d_1)>1$ for all $\ell \in \{ 1,\ldots,L\}$. Since $L>\omega$, the Pigeonhole Principle shows that there exist
$i_0 \in \{ 1,\ldots,\omega\}$ and $1\leqslant \ell < \ell^\prime \leqslant L$ such that $p_{i_0} \mid (\ell q_{i_1}+q_{i_2})$ and
$p_{i_0} \mid (\ell^\prime q_{i_1}+q_{i_2})$, and so $p_{i_0} \mid (\ell^\prime -\ell) q_{i_1}$. But $(p_{i_0},q_{i_1})=1$,
hence $L >\ell^\prime -\ell \geqslant p_{i_0} \geqslant p_1$, which contradicts $L\leqslant p_1$.

So if $(m_1 q_{i_1}+q_{i_2},d)>1$, then there exists $p$ prime with $p\mid q_{i_1}$ and $p\mid (m_1 q_{i_1}+q_{i_2})$, thus
contradicting $(q_{i_1},q_{i_2})=1$. Hence $(m_1 q_{i_1}+q_{i_2},d)=1$, which in turn yields $Q\leqslant m_1 q_{i_1} +q_{i_2} \leqslant
L q_{i_1} +q_{i_2}$. In a similar way one has $Q\leqslant q_{i_1}+L q_{i_2}$, thus \eqref{3.1} leads to
\[
\{ q_j : i_1 <j<i_2\} \subset \{ mq_{i_1}+nq_{i_2}: 1\leqslant m,n\leqslant L\},
\]
and in particular $i_2-i_1 \leqslant L^2$.

The second part of the proof proceeds ad litteram as in the proof of Step (i) \cite[pp.~211--212]{BH} replacing $d$ there by $L$.
\end{proof}

When $d$ is the product of two prime powers the bound above can be lowered. In this case we show that $L(d)\leqslant 5$, which
is sharp for $d=6$ because $\frac{1}{4} < \frac{1}{3} < \frac{1}{2} < \frac{2}{3} < \frac{3}{4}$ are consecutive in $\FF_{4}$.
Our proof employs elementary properties of the transformation $T$ from \eqref{2.7}. In particular \eqref{2.8} and the
following inclusions will be useful in the proof of Lemma \ref{L3}:
\[
\begin{split}
 & T \TT_k \subseteq \TT_1\ \ \mbox{\rm if $k\geqslant 5$}, \qquad \qquad \   T (\TT_3\cup\TT_4) \subseteq \TT_1 \cup \TT_2,\\ &
T\TT_2 \subseteq \TT_1 \cup \TT_2 \cup \TT_3 \cup \TT_4,\qquad T(T\TT_3 \cap \TT_2) \subseteq \TT_1 \cap \TT_2 .
\end{split}
\]

\begin{lemma}\label{L3}
If $d=p^\alpha q^\beta$, then for each $i \in \{ 0,\ldots ,\# \FF_Q -5\}$ there exists $j\in \{ 0,\ldots ,5\}$ such that $(q_{i+j},d)=1$,
and so $L(d)\leqslant 5$.
\end{lemma}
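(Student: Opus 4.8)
The plan is to argue by contradiction, controlling the denominators through the three-term recurrence for consecutive Farey fractions together with the index-transition rules encoded by $T$. After relabelling, suppose $\gamma_0<\cdots<\gamma_5$ are consecutive in $\FF_Q$ with denominators $q_0,\dots,q_5$, and assume toward a contradiction that $(q_j,d)>1$ for every $j$. Writing $p_1,p_2$ for the two prime divisors of $d=p^\alpha q^\beta$ (so that coprimality with $d$ is the same as coprimality with $p_1p_2$), each $q_j$ is divisible by $p_1$ or by $p_2$. Since consecutive Farey denominators are coprime, the sets $A=\{\,j:p_1\mid q_j\,\}$ and $B=\{\,j:p_2\mid q_j\,\}$ are each independent sets (no two adjacent indices) in the path on $\{0,\dots,5\}$. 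As $A\cup B=\{0,\dots,5\}$ and an independent set in this path has at most three elements, both sets have exactly three elements and are disjoint; by uniqueness of the $2$-colouring of a connected bipartite path this forces the strictly alternating pattern. Up to swapping $p_1\leftrightarrow p_2$ I may assume $p_1\mid q_0,q_2,q_4$ and $p_2\mid q_1,q_3,q_5$, with no $q_j$ divisible by both primes.

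Next I would translate divisibility into constraints on indices. Writing $K_m:=\kappa(q_{m-1}/Q,q_m/Q)=\big\lfloor(Q+q_{m-1})/q_m\big\rfloor$ for the index of $\gamma_m$, the recurrence gives $q_{m-1}+q_{m+1}=K_m q_m$, while \eqref{2.8} shows that $T$ sends the orbit point $(q_{m-1}/Q,q_m/Q)$, whose $\kappa$-value is $K_m$, to $(q_m/Q,q_{m+1}/Q)$, whose $\kappa$-value is $K_{m+1}$. The inclusions displayed before the lemma then read as transition rules: $K_m\geqslant 5\Rightarrow K_{m+1}=1$; $K_m\in\{3,4\}\Rightarrow K_{m+1}\in\{1,2\}$; and an index block $(K_{m-1},K_m)=(3,2)$ is followed by $K_{m+1}=1$. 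Applying the recurrence for $m=1,2,3,4$ and reducing modulo the relevant prime — using the alternating pattern and $(q_{m-1},q_m)=1$ to see that the prime divides neither the middle denominator nor the cofactor — yields $p_1\mid K_1,K_3$ and $p_2\mid K_2,K_4$; in particular $K_1,K_2,K_3,K_4\geqslant 2$.

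From here the contradiction should fall out in two short steps. Since $K_{m+1}\geqslant 2$ for $m=1,2,3$, the rule $K_m\geqslant 5\Rightarrow K_{m+1}=1$ forbids $K_1,K_2,K_3$ from being $\geqslant 5$, so each lies in $\{2,3,4\}$. As $K_1$ is divisible by one of $p_1,p_2$ and $K_2$ by the other, both primes must lie in $\{2,3\}$, forcing $\{p_1,p_2\}=\{2,3\}$. Finally, in the phase where $3\mid K_1$ one gets $K_1=3$, hence $K_2\in\{1,2\}$ and so $K_2=2$; the block $(3,2)$ then forces $K_3=1$, contradicting $3\mid K_3$. In the phase where $3\mid K_2$ one gets $K_2=3$, hence $K_3=2$, and the block $(3,2)$ forces $K_4=1$, contradicting $3\mid K_4$. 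Either way we reach a contradiction, so some $q_{i+j}$ is coprime to $d$, giving $L(d)\leqslant 5$.

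I expect the main obstacle to be bookkeeping rather than any single hard idea: one must (i) establish the alternation pattern rigorously, including ruling out a denominator divisible by both primes and handling the two reflected phases, and (ii) extract the precise index-transition rules from the stated $T$-inclusions and track which of the four indices $K_1,\dots,K_4$ each prime divides in each phase. The step that really makes the $d=2^\alpha 3^\beta$ case close is the last inclusion, showing that a $(3,2)$ index block is followed by a $1$; this is also consistent with the sharpness of the bound, the run $\tfrac14<\tfrac13<\tfrac12<\tfrac23<\tfrac34$ in $\FF_4$ exhibiting five consecutive denominators all sharing a factor with $6$.
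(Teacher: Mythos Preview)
Your argument is correct and follows the same strategy as the paper: force the alternating divisibility pattern, use the three-term recurrence to show the primes divide the successive indices, and then let the $T$-inclusions whittle the indices down to a contradiction. Your organization is in fact a bit tidier than the paper's, since you first bound $K_1,K_2,K_3\in\{2,3,4\}$ and deduce $\{p_1,p_2\}=\{2,3\}$ in one stroke, whereas the paper runs a longer case split on $p\geqslant 5$, $p=3$, $p=2$.

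One small correction: the displayed inclusion you cite should read $T(T\TT_3\cap\TT_2)\subseteq\TT_1\cup\TT_2$ (there is a typo in the paper's display; the proof body uses $\cup$), so the rule is that a $(3,2)$ block is followed by $K_{m+1}\in\{1,2\}$, not $K_{m+1}=1$. Concretely, $T(2/5,3/5)=(3/5,4/5)\in\TT_2$ shows the stronger claim is false. This does not damage your proof: in both phases the index following the $(3,2)$ block is required to be divisible by $3$, so $K_{m+1}\in\{1,2\}$ is already a contradiction. Just replace ``forces $K_3=1$'' and ``forces $K_4=1$'' by ``forces $K_3\in\{1,2\}$'' and ``forces $K_4\in\{1,2\}$'' and the argument goes through verbatim.
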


\begin{proof}
We have $q_{i+2}=Kq_{i+1}-q_i$, $q_{i+3}=K^\prime q_{i+2}-q_{i+1}$, $q_{i+4}=K^{\prime\prime} q_{i+3}-q_{i+2}$, $q_{i+5}=K^{\prime\prime\prime} q_{i+4}-q_{i+3}$,
where $K=\kappa \big( \frac{q_i}{Q},\frac{q_{i+1}}{Q}\big)$, $K^\prime=\kappa \big(\frac{q_{i+1}}{Q},\frac{q_{i+2}}{Q}\big)$,
$K^{\prime\prime}=\kappa \big(\frac{q_{i+2}}{Q},\frac{q_{i+3}}{Q}\big)$, and $K^{\prime\prime\prime}=\kappa \big(\frac{q_{i+3}}{Q},\frac{q_{i+4}}{Q}\big)$.
Suppose that $(q_{i},d),\ldots ,(q_{i+5},d)>1$. Then either $p\mid (q_{i},q_{i+2},q_{i+4})$ and
$q\mid (q_{i+1},q_{i+3},q_{i+5})$, or vice versa.

Without loss of generality we can work in the first case. The equality $q_{i+2}+q_i=Kq_{i+1}$ and $p\nmid q_{i+1}$ yield $p\mid K$.
Similarly we have $q\mid K^\prime$.
Assume first that $K\geqslant 5$.
Since $\big( \frac{q_i}{Q},\frac{q_{i+1}}{Q}\big) \in \TT_K$ and $T\TT_K \subseteq \TT_1$ we must have $K^\prime=1$,which contradicts $q\geqslant 2$. In particular $p\geqslant 5$ cannot occur.

When $p=3$ and $K=3$, from $T\TT_3 \subseteq \TT_1 \cup \TT_2$ it follows that
$K^\prime \in \{ 1,2\}$. Since $q\mid K^\prime$, we infer $q=2$.
The region $T\TT_3 \cap \TT_2$ is the quadrilateral with vertices at
$\big( \frac{1}{2},\frac{1}{2}\big)$, $\big(\frac{2}{5},\frac{3}{5}\big)$, $\big( \frac{3}{5},\frac{4}{5}\big)$, and
$\big( \frac{3}{7},\frac{5}{7}\big)$, being further mapped by $T$ into a subset of $\TT_1 \cup \TT_2$ whence $K^{\prime\prime} \in \{ 1,2\}$.
Again $K^{\prime\prime}=1$ leads to an immediate contradiction, while $K^{\prime\prime}=2$ yields
$q_{i+2}+q_{i+4}=2q_{i+3}$, showing that $p=2$, another contradiction.

When $p=2$ and $K<5$, we have $K\in \{ 2,4\}$. Assume first $K=2$. As $T\TT_2 \subseteq \TT_1 \cup \TT_2 \cup \TT_3 \cup \TT_4$ and
$K^\prime \neq 1$ it remains that $K^\prime \in \{ 2,3,4\}$. Since $q\geqslant 3$ divides $K^\prime$, we infer
$q=3$. Furthermore, $T\TT_3 \subseteq \TT_1 \cup \TT_2$ and $K^{\prime\prime}\neq 1$
yield $K^{\prime\prime}=2$. Employing again $T(T\TT_3 \cap \TT_2) \subseteq \TT_1 \cup \TT_2$, we infer $K^{\prime\prime\prime}=2$, and so
$q_{i+3}+q_{i+5}=2q_{i+4}$. This is again a contradiction, because $3$ divides $q_{i+3}+q_{i+5}$ and cannot divide $2q_{i+4}$.
Finally, assume $K=4$, so $K^\prime \in \{ 1,2\}$, which is not possible because $q\geqslant 3$ divides $K^\prime$.
\end{proof}

Note that if $(p_n)$ is the sequence of primes, then none of the denominators of the fractions in $\FF_{p_n}\setminus\{1\}$ are relatively prime to $\prod_{i=1}^np_i$. This gives the lower bound $\#\FF_{p_n}-1$ on the size of the largest string of consecutive fractions in $\FF_Q\setminus\FF_{Q,d}$ for some $Q,d\in\N$ with $\omega(d)=n$. Since $p_n\sim n\log n$ as $n\rightarrow\infty$ and $\#\FF_{Q}\sim\frac{3}{\pi^2}Q^2$ as $Q\rightarrow\infty$, there exists $A>0$ such that $\#\FF_{p_n}-1\geqslant A(n\log n)^2$. Thus any upper bound on $L(d)$ involving only $\omega(d)$ must be greater than $A(\omega(d)\log\omega(d))^2$.

\subsection{The index and the continuant}
The second step in the proof of (1.1) in \cite{BH} relies on \cite[Lemma 1]{BH}, which is actually exactly Remark 2.6 in \cite{BGZ} (see also \cite[Lemma 5]{BCZ}), and on
a result relating the $\ell$-index of a Farey fraction and the continuant of regular continued fractions.
The $\ell$-index of $\gamma_i=\frac{a_i}{q_i}\in\FF_Q$ is the positive integer
$\nu_\ell(\gamma_i)=a_{i+\ell-1}q_{i-1}-a_{i-1}q_{i+\ell-1}$ where $\frac{a_{i+k}}{q_{i+k}}$ denotes the
$k^{\mathrm{th}}$ successor of $\gamma_i$ in $\FF_Q$.
The (regular continued fraction) \emph{continuants} are defined as usual by
$K_0(\cdot)=1$, $K_1(x_1)=1$, and
\[
K_\ell (x_1,\ldots,x_\ell)=x_\ell K_{\ell-1}(x_1,\ldots,x_{\ell-1}) +K_{\ell-2}(x_1,\ldots,x_{\ell-2})\quad
\mbox{\rm if $\ell\geqslant 2$.}
\]
In \cite{H} the identity
\begin{equation}\label{3.2}
\nu_\ell (\gamma_i)=\epsilon_\ell K_{\ell -1} \big( -\nu_2(\gamma_i),\nu_2 (\gamma_{i+1}),\ldots ,
(-1)^{\ell -1} \nu_2 (\gamma_{i+\ell-2})\big)
\end{equation}
was proved, with $\epsilon_\ell=1$ if $\ell\in \{ 0,1\} \pmod{4}$ and
$\epsilon_\ell=-1$ if $\ell \in \{ 2,3\} \pmod{4}$.

We give a very short proof of \eqref{3.2}. We define the
\emph{Farey continuants} $K_\ell^F$ by
$K_0^F(\cdot)=1$, $K_1^F (x_1)=x_1$, and
\[
K_\ell^F (x_1,\ldots,x_\ell) =x_\ell K_{\ell-1}^F (x_1,\ldots,x_{\ell-1})-K_{\ell-2}^F(x_1,\ldots,x_{\ell-2}) \quad
\mbox{\rm if $\ell\geqslant 2$.}
\]
The defining equalities for $K_\ell$ and $K_\ell^F$ plainly yield, for all $\ell\geqslant 2$,
\begin{equation}\label{3.3}
\left( \begin{matrix} x_1 & 1 \\ 1 & 0 \end{matrix}\right) \cdots
\left( \begin{matrix} x_\ell & 1 \\ 1 & 0 \end{matrix}\right) =
\left( \begin{matrix} K_\ell (x_1,\ldots,x_\ell) & K_{\ell-1} (x_1,\ldots,x_{\ell-1}) \\
K_{\ell-1} (x_2,\ldots,x_\ell) & K_{\ell-2} (x_2,\ldots ,x_{\ell-1})\end{matrix}\right) ,
\end{equation}
\begin{equation}\label{3.4}
\left( \begin{matrix} x_1 & 1 \\ -1 & 0 \end{matrix}\right) \cdots
\left( \begin{matrix} x_\ell & 1 \\ -1 & 0 \end{matrix}\right) =
\left( \begin{matrix} K_\ell^F (x_1,\ldots,x_\ell) & K_{\ell-1}^F (x_1,\ldots,x_{\ell-1}) \\
-K_{\ell-1}^F (x_2,\ldots,x_\ell) & -K_{\ell-2}^F (x_2,\ldots ,x_{\ell-1})\end{matrix}\right) .
\end{equation}
From \eqref{3.4} and the definition of $\nu_\ell (\gamma_i)$ we now infer
\begin{equation}\label{3.5}
\nu_\ell (\gamma_i) =  K_{\ell -1}^F \big( \nu_2(\gamma_i),\nu_2 (\gamma_{i+1}),\ldots ,
\nu_2 (\gamma_{i+\ell-2})\big).
\end{equation}
The equality \eqref{3.2} follows immediately from \eqref{3.3}, \eqref{3.4}, \eqref{3.5} and
\[
\left( \begin{matrix} x & 1 \\ -1 & 0 \end{matrix}\right) \left( \begin{matrix} y & 1 \\ -1 & 0 \end{matrix}\right) =
- \left( \begin{matrix} -x & 1 \\ 1 & 0 \end{matrix}\right) \left( \begin{matrix} y & 1 \\ 1 & 0 \end{matrix}\right) .
\]

\section{The gap distribution of $\FF_{Q,d}$}

Letting $d\in\mathbb{N}$ and $\xi>0$, we wish to asymptotically estimate the number of pairs of consecutive elements $\gamma<\gamma'$ in $\mathcal{F}_{Q,d}$ with
$\gamma'-\gamma \leqslant \frac{\xi}{Q^2}$ as $Q\rightarrow\infty$. It is plain that
\[
\# \FF_{Q,d} =\sum\limits_{\substack{q=1 \\ (q,d)=1}}^Q \varphi (q) = C(d) \int_0^Q q\, dq +O_d (Q\log Q) =\frac{C(d)}{2} Q^2 +O_d (Q\log Q),
\]
showing the second equality in \eqref{1.2}.
Denote $N_Q=\# \FF_Q$ and $\gamma_j=\frac{a_j}{q_j}$, so the number of pairs of fractions we wish to estimate is
\[
\begin{split}
N_d (Q,\xi) & =\sum_{\ell=1}^{L(d)} \# \left\{ i\in [1,N_Q] : \begin{matrix} \frac{\nu_\ell(\gamma_i)}{q_{i-1}q_{i+\ell-1}} \leqslant \frac{\xi}{Q^2},\quad
(q_{i-1},d)=(q_{i+\ell-1},d)=1 \\
(q_i,d)>1,\ldots ,(q_{i+\ell-2},d)>1 \end{matrix} \right\} \\
& = \sum_{\ell=1}^{L(d)} \sum_{k=1}^{[\xi]} \# \left\{ i\in [1,N_Q] :
\begin{matrix}  \frac{k}{q_{i-1}q_{i+\ell-1}} \leqslant \frac{\xi}{Q^2}, \quad \nu_\ell(\gamma_i)=k \\ (q_{i-1},d)=(q_{i+\ell-1},d)=1 \\
(q_i,d)>1,\ldots, (q_{i+\ell-2},d) >1  \end{matrix} \right\}.
\end{split}
\]
It is shown in \cite{BCZ,BG} that given $i\in [1,N_Q]$ and $k,\ell\in\mathbb{N}$ with $\ell\geqslant 2$,
if $\nu_\ell(\gamma_i)=k$, then the $(\ell-1)$-tuple $\big( \nu_2(\gamma_i),\ldots,\nu_2 (\gamma_{i+\ell-2})\big)$ can take on $n(k,\ell)$ values,
where $n(k,\ell) \in\mathbb{N}\cup\{0\}$ depends only on $k$ and $\ell$ and not on $i$ or $Q$; and in \cite{H}, it is proven that $\nu_\ell(\gamma_i)$ can be determined if $\big(\nu_2(\gamma_i),\ldots,\nu_2(\gamma_{i+\ell-2})\big)$ is known (cf.~identity \eqref{3.2} above). Therefore, letting $\{x(k,\ell,m) \}_{m=1}^{n(k,\ell)}$ be the
$(\ell-1)$-tuples for which $\nu_\ell(\gamma_i)=k$ whenever $x(k,\ell,m)=\big( \nu_2(\gamma_i),\ldots,\nu_2(\gamma_{i+\ell-2})\big)$ for some $m\in \{1,\ldots,n(k,\ell)\}$, we have
\begin{align*}
& N_d (Q,\xi) =\#\bigg\{ i\in [1,N_Q] : (q_{i-1},d)=(q_i,d)=1,\quad q_{i-1}q_i \geqslant \frac{Q^2}{\xi}\bigg \} \\
& \quad + \sum_{\ell=2}^{L(d)} \sum_{k=1}^{[\xi]} \sum_{m=1}^{n(k,\ell)} \#
\left\{ i\in [1,N_Q] : \begin{matrix} q_{i-1} q_{i+\ell-1} \geqslant \frac{kQ^2}{\xi},\  (q_{i-1},d)=(q_{i+\ell-1},d)=1 \\
(q_i,d)>1,\ldots,(q_{i+\ell-2},d) >1 \\ x(k,\ell,m) = (\nu_2(\gamma_i),\ldots,\nu_2(\gamma_{i+\ell-2})) \end{matrix} \right\} .
\end{align*}
Since $q_{j+1}=\nu_2(\gamma_j)q_j-q_{j-1}$ for $j\in [1, N_Q-1]$, the residue classes of the denominators $q_{i-1},\ldots,q_{i+\ell-1}$ can be
determined once the residue classes of $q_{i-1}$ and $q_i$, and the $(\ell-1)$-tuple $\big( \nu_2(\gamma_i),\ldots,\nu_2(\gamma_{i+\ell-2})\big)$ are known.
Thus, there is a subset $\mathcal{A}_{k,\ell,m}\subseteq\{1,\ldots,d\}^2$ such that when $\big( \nu_2(\gamma_i),\ldots,\nu_2(\gamma_{i+\ell-2})\big)=x(k,\ell,m)$, we have $(q_{i-1},d)=(q_{i+\ell-1},d)=1$ and $(q_{i+j-1},d)>1$ for $1\leqslant j<\ell$ if and only if $(q_{i-1},q_i)\pmod{d}\in\mathcal{A}_{k,\ell,m}$. (Note clearly that $(a,d)=1$ for $(a,b)\in\mathcal{A}_{k,\ell,m}$.) Furthermore, if we let $x(k,\ell,m)= \big( x_1(k,\ell,m),\ldots,x_{\ell-1}(k,\ell,m)\big)$
and denote $\mathbb{Z}_{\textrm{vis}}^2=\{(a,b)\in\Z^2 :(a,b)=1\}$, it is clear that $\big( \nu_2(\gamma_i) , \ldots , \nu_2 (\gamma_{i+\ell-2})) = x(k,\ell,m)$ if and only if
$$(q_{i-1},q_i)\in Q\cdot (\mathcal{T}_{x_1(k,\ell,m)}\cap T^{-1}\mathcal{T}_{x_2(k,\ell,m)}\cap\cdots\cap T^{-(\ell-2)}\mathcal{T}_{x_{\ell-1}(k,\ell,m)})\cap\mathbb{Z}_{\textrm{vis}}^2.$$

Now if we let $\pi_1, \pi_2 :\R^2\rightarrow \R$ be the canonical projections, then
\[
\frac{q_{i-1}q_{i+\ell-1}}{Q^2} = \pi_1 \bigg( \frac{q_{i-1}}{Q} , \frac{q_i}{Q}\bigg) \cdot (\pi_2\circ T^{\ell-1}) \bigg( \frac{q_{i-1}}{Q},\frac{q_i}{Q}\bigg) ,
 \]
and so
\[
q_{i-1}q_{i+\ell-1} \geqslant \frac{kQ^2}{\xi} \quad \Longleftrightarrow \quad (q_{i-1},q_i)\in Qg_\ell^{-1} \bigg[ \frac{k}{\xi},\infty \bigg),
\]
where $g_\ell=\pi_1\cdot(\pi_2\circ T^{\ell-1})$.
Now set
$g_1(x,y)=xy$ and
\begin{align*}
\Omega_{k,\ell,m}(\xi) & =\TT_{x_1(k,\ell,m)}\cap T^{-1}\TT_{x_2(k,\ell,m)} \cap \cdots\cap  T^{-(\ell-2)}\TT_{x_{\ell-1}(k,\ell,m)} \cap g_\ell^{-1} \bigg[ \frac{k}{\xi},\infty \bigg),\\
\Omega_1(\xi)&=\mathcal{T}\cap g_1^{-1} \bigg[ \frac{1}{\xi},\infty \bigg),\qquad
\mathcal{A}_1 = \{(a,b):a,b\in [1,d],(a,d)=(b,d)=1\}.
\end{align*}
We then have
\[
\begin{split}
N_d (Q,\xi) = & \sum_{(a,b)\in\mathcal{A}_1} \# Q\Omega_1(\xi) \cap \big( (a,b)+d\Z^2\big) \cap \Z_{\textrm{vis}}^2 \\
& + \sum_{\ell=2}^{L(d)} \sum_{k=1}^{[\xi]} \sum_{m=1}^{n(k,\ell)} \sum_{(a,b)\in\mathcal{A}_{k,\ell,m}}
\# Q\Omega_{k,\ell,m}(\xi) \cap \big( (a,b)+d\Z^2\big) \cap \Z_{\textrm{vis}}^2,
\end{split}
\]
where we have used the fact that if $(a,b) \in Q\mathcal{T} \cap \Z_{\textrm{vis}}^2$, then there is an $i$ such that $a=q_{i-1}$ and $b=q_{i}$. One can prove in a similar manner to \cite[Lemma 2]{BH} that for all bounded $\Omega\subseteq\mathbb{R}^2$ whose boundary can be covered by the images of finitely many Lipschitz functions from $[0,1]$ to $\mathbb{R}^2$, and for all $\mathcal{A}\subseteq\{1,\ldots,d\}^2$ in which $(a,d)=1$ for all $(a,b)\in\mathcal{A}$, we have
$$\sum_{(a,b)\in\mathcal{A}} \# Q\Omega \cap \big((a,b) + d \Z^2\big) \cap \Z_{\textrm{vis}}^2 = \frac{\operatorname{Area}(\Omega) \#\AA}{\zeta(2) d^2}
\prod\limits_{\substack{p\in {\mathcal P} \\ p \mid d}} \bigg( 1-\frac{1}{p^2}\bigg)^{-1} Q^2 + O_d (Q\log Q)$$ as $Q\rightarrow\infty$.
It is easily seen that the boundaries of $\Omega_1(\xi)$ and $\Omega_{k,\ell,m}(\xi)$ can be covered by finitely many Lipschitz functions from $[0,1]$ to $\mathbb{R}^2$, and so we have

\[
N_d (\xi,Q) = C_d (\xi) Q^2 + O_d (Q\log Q) ,
\]
where
\begin{equation*}
\begin{split}
C_d (\xi) & = \frac{1}{\zeta(2) d^2} \prod\limits_{\substack{p\in {\mathcal P} \\ p\mid d}} \bigg( 1-\frac{1}{p^2} \bigg)^{-1} \\
& \hspace{1cm} \cdot
\left( \varphi(d)^2 \operatorname{Area} (\Omega_1(\xi)) + \sum_{\ell=2}^{L(d)} \sum_{k=1}^{[\xi]} \sum_{m=1}^{n(k,\ell)}
\operatorname{Area}(\Omega_{k,\ell,m}(\xi))\#\mathcal{A}_{k,\ell,m}\right),
\end{split}
\end{equation*}
noting that $\#\mathcal{A}_1=\varphi(d)^2$.

The gap limiting measure of
$(\FF_{Q,d})_Q$ exists with distribution function given by
\begin{equation*}
F_d (\xi) =\int_0^\xi d\nu_d =\frac{1}{K_d} C_d \bigg( \frac{\xi}{K_d} \bigg) .
\end{equation*}
When $d$ is a prime power this can be expressed more explicitly as in \eqref{2.9}.

\appendix
\numberwithin{equation}{section}
\section{Appendix}

For the convenience of the reader we collect in this appendix the asymptotic formulas used in this paper.

Assuming that $f$ is a $C^1$ function on the interval of integration in \eqref{A1}-\eqref{A3} and that $I$, $J$ are intervals and $f\in C^1 (I\times J)$
in \eqref{A4}, we have
\begin{equation}\label{A1}
\sum\limits_{\substack{a<k\leqslant b \\ (k,q)=1}} f(k) = \frac{\varphi(q)}{q} \int_a^b f(x)\, dx +
O\Big( \sigma_0(q)\big( \| f\|_\infty +T_a^b f \big) \Big) .
\end{equation}
\begin{equation}\label{A2}
\sum\limits_{\substack{1\leqslant k\leqslant N \\ (k,q)=1}} \frac{\varphi(k)}{k} f(k) =C(\ell) \int_0^N f(x)\, dx +
O_\ell \Big( \big( \| f\|_\infty +T_0^N f \big) \log N \Big) .
\end{equation}
\begin{equation}\label{A3}
\sum\limits_{1\leqslant k\leqslant N} \frac{\varphi (\ell k)}{k} f(k) =\ell C(\ell) \int_0^N f(x)\, dx +
O_{\ell,\delta} \Big( \big( \| f\|_\infty +T_0^N f \big) N^\delta \Big) .
\end{equation}
\begin{equation}\label{A4}
\begin{split}
\sum\limits_{\substack{a\in I,b\in J \\ ab\equiv h \hspace{-6pt}\pmod{q} \\ (b,q)=1}} \hspace{-10pt} f(a,b) = &
\frac{\varphi(q)}{q^2} \iint_{I\times J} f(x,y)\, dxdy +O_\delta \big( T^2 \| f\|_\infty q^{1/2+\delta} (h,q)^{1/2}\big) \\
& + O_\delta \bigg( T \| \nabla f\|_\infty q^{3/2+\delta} (h,q)^{1/2} +
\frac{1}{T}\| \nabla f\|_\infty \vert I\vert \cdot \vert J\vert \bigg) .
\end{split}
\end{equation}
Proofs can be found for instance in \cite[Lemma 2.2]{BCZ0}, \cite[Lemmas 2.1 and 2.2]{BG},
and respectively in \cite[Proposition A4]{BZ}.

\subsection*{Acknowledgements}
The second author acknowledges support from Department of Education Grant
P200A090062,``University of Illinois GAANN Mathematics Fellowship Project."
The third author acknowledges support from National Science Foundation grant
DMS 08-38434 ``EMSW21-MCTP: Research Experience for Graduate Students."
We are grateful to the referee for careful reading and pertinent remarks.

\end{document}